\numberwithin{equation}{section}
\newtheorem{theorem}[equation]{Theorem}
\newtheorem{corollary}[equation]{Corollary}
\newtheorem{claim}[equation]{Claim}
\newtheorem{lemma}[equation]{Lemma}
\newtheorem{proposition}[equation]{Proposition}
\newtheorem{ass}[equation]{Assumptions}
\newtheorem{constr}[equation]{Construction}
\theoremstyle{definition}
\newtheorem{definition}[equation]{Definition}
\newtheorem{remark}[equation]{Remark}
\newtheorem{conjecture}[equation]{Conjecture}
\theoremstyle{remark}
\newcommand{\ZZ}{\mathbb{Z}}
\newcommand{\G}{\mathbb{G}}
\newcommand{\Pp}{\mathbb{P}}
\newcommand{\Ff}{\mathcal{F}}
\newcommand{\Pm}{\mathcal{P}}
\newcommand{\HH}{\mathcal{H}}
\newcommand{\LL}{\mathcal{L}}
\newcommand{\N}{\mathcal{N}}
\newcommand{\Oc}{\mathcal{O}}
\begin{document}
\title[Brill-Noether theory and non-special scrolls]%
{Brill-Noether theory and non-special scrolls}
\author{Alberto Calabri, Ciro Ciliberto, Flaminio Flamini, Rick Miranda}

\email{calabri@dmsa.unipd.it}
\curraddr{Dipartimento di Metodi e Modelli Matematici per le
Scienze Applicate, Universit\`a degli Studi di Padova\\
Via Trieste, 63 - 35121 Padova \\Italy}
\email{cilibert@mat.uniroma2.it} \curraddr{Dipartimento di
Matematica, Universit\`a degli Studi di Roma Tor Vergata\\ Via
della Ricerca Scientifica - 00133 Roma \\Italy}
\email{flamini@mat.uniroma2.it} \curraddr{Dipartimento di
Matematica, Universit\`a degli Studi di Roma Tor Vergata\\ Via
della Ricerca Scientifica - 00133 Roma \\Italy}
\email{Rick.Miranda@ColoState.Edu} \curraddr{Department of
Mathematics, 101 Weber Building, Colorado State University
\\ Fort Collins, CO 80523-1874 \\USA}

\thanks{{\it Mathematics Subject Classification (2000)}: 14H60; 14H51; 14D06;
14C20 (Secondary) 14J26; 14N10. \\ {\it Keywords}: vector bundles on curves; Brill-Noether theory;
ruled surfaces; Hilbert schemes of scrolls; Moduli; embedded degenerations.
\\
The first three authors are members of G.N.S.A.G.A.\ at
I.N.d.A.M.\ ``Francesco Severi''.}

\begin{abstract} In this paper we study  the
Brill-Noether theory of sub-line bundles of a general, stable rank-two vector bundle
on a curve $C$ with general moduli.  We relate this theory
to the geometry of unisecant curves on smooth, non-special scrolls with hyperplane sections isomorphic
to $C$. Most of our results are based on degeneration techniques.
\end{abstract}
\maketitle
% ----------------------------------------------------------------
\noindent

%%%%%%%%%%%%%%%%%%%%%%%%%%%%%%%%%%%%%%%%%%%%%
\section{Introduction}\label{S:Intro}
%%%%%%%%%%%%%%%%%%%%%%%%%%%%%%%%%%%%%%%%%%%%%

The classical Brill-Noether theory aims to the description of all
families $G^r_d(C)$ of linear series of fixed degree $d$ and
dimension $r$ on a given curve $C$ of genus $g$. Equivalently, one
can consider the image via the Abel-Jacobi map $W^r_d(C) \subseteq
{\rm Pic}^d(C)$ of $G^r_d(C)$. In such a generality, the project
is certainly too ambitious. However, for $C$ sufficiently general
in $\mathcal M_g$ the problem has been completely solved. The main
results are Griffiths-Harris' theorem (see \cite{GH2}), which
determines the dimensions of the families $G^r_d(C)$, and
Gieseker's  theorem (see \cite{Gie}), proving the so called {\em
Petri's conjecture} which refines Griffiths-Harris' result giving
further important information about the local structure of
$G^r_d(C)$. Recall also Fulton-Lazarsfeld's theorem (see
\cite{FuLaz}) asserting that $W^r_d(C)$ is connected, for any
curve $C$, as soon as its dimension is positive.

\noindent Brill-Noether's type of questions can be asked replacing
line bundles with vector bundles. This can be done in various ways
(cf.\ e.g.\ \cite{BPL}, \cite{RT} and \cite{Te} as general
references).

In this paper we will take the following view-point. Given a curve
$C$ of genus $g \geq 1$, one can consider the moduli space
$U_C(d)$ of semistable, degree $d$, rank-two vector bundles on
$C$, which is an irreducible, projective variety of dimension
$4g-3 + \epsilon$, where $\epsilon = 1$, if $g = 1$ and $d$ is
even, $\epsilon = 0$ otherwise (cf.\ e.g.\ \cite{New}). For any
$[\Ff] \in U_C(d)$, one can consider the set
\begin{equation}\label{eq:intquot}
M_n(\Ff) := \{ N \subset \Ff \; | \; N \; \mbox{sub-line bundle of} \; \Ff, \; \deg(N) = n \},
\end{equation}which has a natural structure of Quot-scheme. Note that $M_n(\Ff)$ is
isomorphic to $M_{n+2l} (\Ff \otimes L)$, for any $L \in {\rm
Pic}^l(C)$. If $[\Ff] \in U_C(d)$ is general, then $M_n(\Ff)$ is
not empty if and only if $n \leq \left\lfloor
\frac{d-g+1}{2}\right\rfloor =: \overline{n}$ (cf.\ Corollary
\ref{thm:maruyama}, Remark \ref{rem:2allag} and \cite{Ma}). The
problem we consider here is to study the loci $M_n(\Ff)$, for $C$
general of genus $g$ and $\Ff$ general in $U_C(d)$, as well as
their  images $W_n(\Ff)$ in ${\rm Pic^n(C)}$. Of course, similar
questions can be asked for vector bundles of any rank and in this
generality they have been considered by various authors (see
\cite{LN}, \cite{Ma}, \cite{Oxb}).

As well known, the study of vector bundles on curves is equivalent
to the one of scrolls in projective space. Therefore, the above
questions can  be translated in terms of the geometry of scrolls.
Let $S$ be a smooth, non-special scroll of degree $d$ and
sectional genus $g \geq 0$ which is linearly normal in $\Pp^R$, $R
= d- 2g +1$. If $d \geq
2g+3+\min\{1,g-1\}$, such scrolls fill up a unique component
$\HH_{d,g}$ of the Hilbert scheme of surfaces in $\Pp^R$ which
dominates ${\mathcal M}_g$ (cf.\ Theorem \ref{thm:Lincei} below).

Let $[S] \in \HH_{d,g}$ be a general point, such that $S \cong
\Pp(\Ff)$, where $\Ff$ is a very ample rank-two vector bundle of
degree $d$ on $C$, a curve of genus $g$ with general moduli, and
$S$ is embedded in $\Pp^R$ via the global sections of
$\Oc_{\Pp(\Ff)} (1)$. In \cite{CCFMnonsp} we showed that, if $g
\geq 1$ and $S$ is general, then $\Ff$ is general in $U_C(d)$
(cf.\ \cite{APS} and \cite[Theorem 5.5]{CCFMnonsp}). We then proved that $S$ is a {\em
general ruled surface} in the sense of Ghione \cite{Ghio}, namely
the scheme ${\rm Div}_S^{1,m}$ parametrizing unisecant curves of
given degree $m$ on $S$ behaves as {\em expected} (for details,
cf.\ \cite[Def. 6.6 and Thm. 6.9]{CCFMnonsp}). If we put $m :=
d-n$, in Proposition \ref{prop:smmn} we prove that there is a
natural isomorphism $${\rm Div}_S^{1,m} \cong M_n(\Ff).$$This
provides the translation from the vector bundle to the scroll
setting. The map $$\pi_n: M_n(\Ff) \to W_n(\Ff) \subseteq {\rm
Pic}^n(C)$$can also be interpreted in terms of curves on the
scroll: the fibres of $\pi_n$ are connected (cf.\ Lemma
\ref{lem:fibres}) and can be identified with linear systems of
unisecant curves of degree $m$ on $S$. Therefore, the map $\pi_n$
can be regarded as an analogue of the Abel-Jacobi map. It is then
natural to consider the subschemes $W^r_n(\Ff) \subseteq W_n(\Ff)$
of points where the fibres of $\pi_n$ has dimension at least $r$.
These are analogues of the classical Brill-Noether loci.

The scheme ${\rm Div}_S^{1,m}$ was originally studied by C. Segre
(cf.\ \cite{Seg}), then in \cite{Ghio} and, by the present authors,
in \cite{CCFMnonsp}, where we used degeneration techniques. These
techniques, in particular the degeneration of a general scroll in
$\HH_{d,g}$ to the union of a rational normal scroll and $g$
quadrics (cf.\ Construction \ref{const:Y}), are also the main tool
in the present paper.

First of all, we apply the results in \cite{CCFMnonsp} to prove a conjecture
by Oxbury asserting that $M_{\overline{n}}(\Ff)$ is connected  for any curve $C$ of
genus $g$, $[\Ff] \in U_C(d)$ general and $d-g$ even
(cf.\ \cite[Conjecture 2.8]{Oxb}; Oxbury's conjecture refers more generally
to vector bundles of any rank).

Then, we turn to the consideration of  $W^r_n(\Ff)$.
In order to study such loci, a basic ingredient is the contraction map
$$\mu_N : H^0(\Ff \otimes N^{\vee}) \otimes H^0( \Ff^{\vee} \otimes \omega_C \otimes N) \to H^0(\omega_C)$$
which, in accordance to the line bundle case,
is called the {\em Petri map} of the pair $(\Ff, N)$ (cf.\ e.g.\ \cite{Oxb}).
The case $n = \overline{n}$ is  already studied in \cite{Oxb}
(cf.\ Proposition \ref{prop:pnbar} below). For $n < \overline{n}$ the situation is more complicated.
In Proposition \ref{prop:ganzo3}, we give some general results about the Brill-Noether
filtration in the general moduli case. In particular we show that, when $[S] \in \HH_{d,g}$ is general,
one has:
\begin{enumerate}[(a)]
\item if ${\rm dim}({\rm Div}_S^{1,m}) \geq g$ and  $[\Gamma] \in {\rm Div}_S^{1,m}$ is general,
then ${\rm dim}(|\Oc_S(\Gamma)|) = {\rm dim}({\rm Div}_S^{1,m})  - g,$
\item if $0 \leq {\rm dim}({\rm Div}_S^{1,m}) < g$ and  $[\Gamma] \in {\rm Div}_S^{1,m}$ is general, then
${\rm dim}(|\Oc_S(\Gamma)|) = 0.$
\end{enumerate}
In Theorem \ref{thm:existBN}, we concentrate on $W^1_n(\Ff)$ and, when $C$ has general moduli,
we prove that each of its irreducible components has the expected dimension.
We finish the paper by proving  an enumerative result, i.e.\ Theorem \ref{thm:sum}, in which  we
compute the class of the sum of all sub-line bundles of $\Ff$ of maximal degree, when these are
finitely many and $[\Ff] \in U_C(d)$ is general.

The paper is organized as follows.  In \S\ \ref{S:NotPre}
we collect standard definitions and properties of scrolls and unisecant curves.
In \S\ \ref{S:HNSS} we recall the results in \cite{CCFMLincei} and in \cite{CCFMnonsp}.
In \S's\ \ref{S:BrillNoether} and \ref{S:BN1np} we prove the above-mentioned
results of the Brill-Noether theory, whereas
\S\ \ref{SS:4} contains the enumerative result.

%%%%%%%%%%%%%%%%%%%%%%%%%%%%%%%%%%%%%%%%%%%%%
\section{Notation and preliminaries}\label{S:NotPre}
%%%%%%%%%%%%%%%%%%%%%%%%%%%%%%%%%%%%%%%%%%%%%

In this section we will fix notation and general assumptions as in \cite{CCFMnonsp}. For non reminded terminology
we refer the reader to \cite{Ha}, \cite{New}, \cite{Ses} and \cite{CCFMnonsp}.

Let $C$ be a smooth, projective curve of genus $g \geq 0$ and let $\rho \colon F \to C$
be a {\em geometrically ruled surface} on $C$, namely $F = \Pp(\Ff)$,
for some rank-two vector bundle, or locally free sheaf, $\Ff$ on $C$.
In this paper, we shall make the following:
\begin{ass}\label{ass:1}
We assume that $h^0(C, \Ff) = R+1$, for some $R \geq 3$,
that $|\Oc_F(1)|$ is base-point-free  and that
the corresponding morphism $\Phi: F \to \Pp^R$ is birational to its image.
\end{ass}
\noindent
We denote by $d$ the degree $\deg(\Ff) := \deg(\det(\Ff))$.

\begin{definition}\label{def:scroll} The surface $\Phi(F) :=S \subset \Pp^R$
is called a {\em scroll of degree $d$ and of (sectional) genus $g$},
and $S$ is called the {\em scroll determined by the pair} $(\Ff, C)$.
$S$ is smooth if and only if $\Ff$ is very ample; if
$S$ is singular, then $F$ is its minimal desingularization.
For any $x \in C$, let $f_x := \rho^{-1}(x) \cong \Pp^1$. The line $l_x := \Phi(f_x)$ is called a {\em ruling} of
$S$. Abusing terminology, the
family $\{ l_x \}_{ x \in C}$ is also called the {\em ruling} of $S$.
\end{definition}
\noindent For further details on ruled surfaces, we refer to
\cite{GH}, \cite[\S\,V]{Ha}, \cite{APS}, \cite{GP1}, \cite{GP2},
\cite{GP3}, \cite{Ghio}, \cite{GS}, \cite{GiSo}, \cite{Ma},
\cite{MN}, \cite{Na}, \cite{Seg} and \cite{Sev1}. If we denote by
$H$ the section of $\rho$ such  that $\Oc_F(H) = \Oc_F(1)$, then
${\rm Pic}(F) \cong \ZZ[\Oc_F(H)] \oplus \rho^*({\rm Pic}(C))$; if
$\underline{d}\in {\rm Div}(C)$,  we denote by $\underline{d} f$
the divisor $\rho^*(\underline{d})$ on $F$, where $f$ is the
general fibre of $\rho$. A similar notation will be used when
$\underline{d}\in {\rm Pic}(C)$. Thus, any element of ${\rm
Pic}(F)$ corresponds to a divisor on $F$ of the form $nH +
\underline{d} f$, for some $n \in \ZZ$ and $\underline{d}\in {\rm
Pic}(C)$.
\begin{definition}\label{def:unisec} Any curve $B \in |H + \underline{d} f|$ is called a {\em unisecant curve}
of $F$. Any irreducible unisecant curve $B$ of $F$ is smooth and is called a
{\em section} of $F$.
\end{definition}

There is a one-to-one correspondence between sections $B$ of $F$ and surjections
$\Ff \twoheadrightarrow L$, with $L =L_B$ a line bundle on $C$ (cf.\ \cite[\S\;V, Prop.\ 2.6 and 2.9]{Ha}).
Then, one has an exact sequence
\begin{equation}\label{eq:Fund}
0 \to N \to \Ff \to L \to 0,
\end{equation}where $N$ is a line bundle on $C$.
If $L =\Oc_C( \underline{m}) $, with  $\underline{m} \in {\rm Div}^m(C)$,
then $m = HB$ and $B \sim H + ( \underline{m} - \det(\Ff)) f $.
One has
\begin{equation}\label{eq:Ciro410b}
\Oc_B(B) \cong N^{\vee} \otimes L
\end{equation}
(cf.\ \cite[\S\,5]{Ha}). In particular,
\begin{equation}\label{eq:Ciro410}
B^2 = \deg(L) - \deg(N) = d - 2 \, \deg(N) = 2m - d.
\end{equation}
Similarly, if $B_1$ is a reducible unisecant curve of $F$ such that $H B_1 = m$,
there exists a section $B \subset F$
and an effective divisor $\underline{a} \in {\rm Div}(C)$,
$a:= \deg(\underline{a})$, such that $B_1 = B + \underline{a} f,$
where $BH = m-a$.
In particular there exists a
line bundle $L = L_B$ on $C$, with $\deg(L) = m-a$,
fitting in \eqref{eq:Fund}. Thus, one obtains the exact sequence
\begin{equation}\label{eq:Fund2}
0 \to N \otimes \Oc_C( - \underline{a}) \to \Ff \to L \oplus \Oc_{\underline{a}} \to 0.
\end{equation}(for details, cf.\ \cite{CCFMnonsp}).

\begin{definition}\label{def:direct} Let $S$ be a scroll of degree $d$ and
genus $g$ corresponding to $(\Ff, C)$ and  let $B \subset F$ be a section and
$L $ as in \eqref{eq:Fund}. If $\Phi|_B$ is birational to its image, then
$\Gamma:= \Phi(B)\subset S$ is called  a {\em section} of $S$. We will say that the pair
$(S,\Gamma)$ is {\em associated with} \eqref{eq:Fund} and that $\Gamma$ {\em corresponds to} $L$ on $C$.
If $m = \deg(L)$, then $\Gamma$ is a {\em section of degree} $m$
of $S$; moreover, $\Phi|_B : B \cong C \to \Gamma$
is determined by the linear series $\Lambda \subseteq |L|$, which
is the image of the map $H^0(\Ff) \to H^0(L).$
\noindent
If $B_1 \subset F$ is a (reducible) unisecant curve and $\Phi|_{B_1}$ is birational to its image,
then $\Phi(B_1) = \Gamma_1$ is a {\em unisecant curve of degree} $m$ of $S$. As above, the pair $(S, \Gamma_1)$
corresponds to a sequence of type \eqref{eq:Fund2}.
\end{definition}

By Riemann-Roch, one has
\begin{equation}\label{eq:R+}
R+1:= h^0(\Oc_F(1)) = d - 2 g + 2 + h^1(\Oc_F(1)).
\end{equation}
\begin{definition}[{cf.\ \cite[\S\;3, p.\ 128]{Seg}}]\label{def:spec}
We will call $h^1 (\Oc_F(1))$ the {\em speciality} of the scroll $S$.
A scroll $S$ is said to be {\em special} if $h^1(\Oc_F(1)) > 0$,
{\em non-special} otherwise.
\end{definition}

\noindent
For bounds  and remarks on $h^1(\Oc_F(1))$, we refer the reader to
e.g.\ \cite[Lemma 3.7, Example 3.10]{CCFMnonsp} and to \cite[pp.\ 144-145]{Seg}.
\begin{definition}\label{def:lndirec} Let $\Gamma_1 \subset S$ be a unisecant curve of $S$ of degree $m$
such that $(S,\Gamma_1)$ is associated to a sequence like \eqref{eq:Fund2}.
Then, $\Gamma_1$ is said to be
{\em special}, if $h^1(C, L)>0$, and {\em linearly normally embedded},
if $H^0(\Ff) \twoheadrightarrow H^0(L \oplus \Oc_{\underline{a}}).$
\end{definition}

%%%%%%%%%%%%%%%%%%%%%%%%%%%%%%%%%%%%%%%%%%%%%
\section{Hilbert schemes}\label{S:HNSS}
%%%%%%%%%%%%%%%%%%%%%%%%%%%%%%%%%%%%%%%%%%%%%
Let $S$ be a linearly normal, non-special scroll of degree $d$ and genus $g$.
When $g=0$, $S$ is rational and its properties are well-known
(see e.g.\ \cite{GH}). Thus, from now on, we shall focus on the case $g\geq 1$.
From \eqref{eq:R+}, one has that $S \subset \Pp^R$ where $R = d-2g + 1$ and $d\geq2g+2$,
because of the condition $R\geq3$ in Assumptions \ref{ass:1}. If, in addition, we assume that $S$ is smooth,
then $d\geq 2g+3+k$, where $k=\min\{1,g-1\}$ (cf.\ e.g.\ \cite[Remark 4.20]{CCFMLincei}).
In this situation, one has the following result essentially contained in \cite{APS} (cf. also 
\cite[Theorem 1.2]{CCFMLincei} and \cite[Theorem 5.4]{CCFMnonsp}).

\begin{theorem}\label{thm:Lincei}
Let $g \geq 0$ be an integer and let $k = {\rm min}\{1, g-1\}$. If
$ d \geq 2 g + 3 + k$, there exists a unique, irreducible component $\HH_{d,g}$ of the
Hilbert scheme of scrolls of degree $d$, sectional genus $g$ in
$\Pp^R$ such that the general point $[S] \in \HH_{d,g}$ represents a
smooth, non-special and linearly normal scroll $S$.
Furthermore,
\begin{enumerate}[{\rm(i)}]
\item $\HH_{d,g}$ is generically reduced;
\item ${\rm dim} (\HH_{d,g}) = 7(g-1) + (d-2g+2)^2 = 7(g-1) + (R+1)^2$;
\item $\HH_{d,g}$ dominates the moduli space
${\mathcal M}_g$ of smooth curves of genus $g$.
\end{enumerate}
\noindent
If, moreover $g \geq 1$, let $(\Ff, C)$ be a pair which determines $S$, where
$[C] \in {\mathcal M}_g$ is general. If $U_C(d)$ denotes the moduli space of semistable, degree $d$, 
rank-two vector bundles on $C$, then $[\Ff] \in U_C(d)$ is general.
\qed
\end{theorem}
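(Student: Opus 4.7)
My plan is to realize $\HH_{d,g}$ as the closure in the Hilbert scheme of the locus of scrolls arising from pairs $(C,\Ff)$, where $C$ is a smooth curve of genus $g$ and $\Ff$ is a rank-two vector bundle of degree $d$ on $C$ satisfying good cohomological and ampleness properties. First I would show that, under the bound $d \geq 2g+3+k$, a general $[\Ff] \in U_C(d)$ over a general $[C] \in \M_g$ is very ample and satisfies $h^1(C,\Ff) = 0$; by Riemann--Roch this forces $h^0(C,\Ff) = d - 2g + 2 = R+1$, so $\Oc_{\Pp(\Ff)}(1)$ is very ample and induces a closed embedding $\Phi \colon \Pp(\Ff) \hookrightarrow \Pp^R$ whose image $S$ is a smooth, linearly normal, non-special scroll of the required type. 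Vanishing of $h^1$ is the classical consequence of stability plus degree bounds; very ampleness follows similarly, with the extra term $k$ in the bound accounting for the exceptional cases $g=1$ and $g=2$.

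Second, I would compute dimensions and deduce irreducibility. The coarse moduli of pairs $(C,\Ff)$ has dimension $(3g-3)+(4g-3) = 7g-6$ for $g \geq 2$ (with the analogous cases $g = 0,1$ handled by adjusting $\dim \M_g$ and $\dim U_C(d)$), and the resulting scrolls sweep out a $\mathrm{PGL}(R+1)$-invariant locus in $\mathrm{Hilb}(\Pp^R)$ of dimension
\[
(7g-6) + \bigl((R+1)^2 - 1\bigr) = 7(g-1) + (R+1)^2,
\]
once one checks that $S$ has no infinitesimal automorphisms coming from $\mathrm{PGL}(R+1)$ beyond those induced by automorphisms of $(C,\Ff)$. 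Irreducibility of $\M_g$ and of $U_C(d)$ gives irreducibility of the source, so this locus lies in a single component $\HH_{d,g}$, and the projection $\HH_{d,g}\to \M_g$ is dominant by construction.

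Third, for generic reducedness and the matching of $\dim \HH_{d,g}$ with $h^0(N_{S/\Pp^R})$, I would show $h^1(S, N_{S/\Pp^R}) = 0$ at a general $[S]$. Via
\[
0 \to T_S \to T_{\Pp^R}|_S \to N_{S/\Pp^R} \to 0
\]
and the Euler sequence, this reduces to cohomological statements on $S$ involving $\Oc_S(1)$ and $T_S$, which via $\rho\colon F \to C$ translate back to cohomology of bundles built from $\Ff$ and $\det(\Ff)$ on $C$. The most elegant way to carry this out, following the techniques of \cite{CCFMnonsp} and \cite{CCFMLincei}, is by degeneration: specialize $S$ to the reducible scroll $Y = \mathcal{R} \cup Q_1 \cup \cdots \cup Q_g$ from Construction \ref{const:Y}, where $\mathcal{R}$ is a rational normal scroll and each $Q_i$ is a smooth quadric glued to $\mathcal{R}$ along two rulings; the normal sheaf of $Y$ splits into pieces one can analyze separately on each component, and semicontinuity transfers the vanishing and the dimension count to the general $[S]$. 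This degeneration step is the main obstacle, since one must both verify that $Y$ is smoothable inside the Hilbert scheme with general fibre a non-special linearly normal scroll (proving uniqueness of the component) and perform the local cohomological bookkeeping at the nodal curves $\mathcal{R}\cap Q_i$.

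Finally, the generality statement for $\Ff$ follows by dimension-chasing along the dominant map $\HH_{d,g} \to \M_g$: over a general $[C]$ the fibre has dimension $\dim \HH_{d,g} - \dim \M_g = 4g - 3 + (R+1)^2 - 1$, which exactly accounts for $\dim U_C(d)$ plus $\dim \mathrm{PGL}(R+1)$, so the induced rational map $U_C(d) \dashrightarrow \HH_{d,g}|_C / \mathrm{PGL}(R+1)$ must be dominant, forcing a general $[\Ff]$ over a general $[C]$ to correspond to a general $[S]\in \HH_{d,g}$.
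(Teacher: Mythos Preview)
The paper does not give a proof of Theorem \ref{thm:Lincei}: it is stated with a terminal \qed\ and attributed to \cite{APS}, with further references to \cite[Theorem 1.2]{CCFMLincei} and \cite[Theorem 5.4, Theorem 5.5]{CCFMnonsp}. Your outline is a reasonable reconstruction of the strategy in those sources and is essentially correct in spirit.

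Two points of comparison are worth noting. First, you invoke Construction \ref{const:Y} (the degeneration $Y = W \cup Q_1 \cup \cdots \cup Q_g$ with $W$ rational normal and the $Q_j$ quadrics) as the tool for proving $h^1(\N_{S/\Pp^R})=0$ and smoothability; but in the present paper that construction is stated \emph{after} Theorem \ref{thm:Lincei} and already presupposes the component $\HH_{d,g}$. The proof in \cite{CCFMLincei} instead uses degenerations of scrolls to unions of planes (hence its title), and the quadric degenerations of Construction \ref{const:Y} come from \cite{CCFMnonsp} for different purposes. So your sketch is not circular in substance, but you should either cite the planar degenerations of \cite{CCFMLincei} or verify independently that $Y$ smooths to a non-special linearly normal scroll before appealing to it.

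Second, your dimension-count argument for the generality of $[\Ff] \in U_C(d)$ is correct arithmetically, but as stated it only shows that the dimensions are \emph{consistent} with dominance; to conclude, you also need that the rational map from the very-ample locus of $U_C(d)$ to the fibre of $\HH_{d,g}$ over $[C]$ modulo $\mathrm{PGL}(R+1)$ is generically injective (equivalently, that $S$ determines $[\Ff]$), which is elementary but should be said. This is the content of \cite[Theorem 5.5]{CCFMnonsp}, cited in the introduction.
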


We recall a construction of some reducible surfaces corresponding to points in $\HH_{d,g}$.
This is one of the key ingredients of the degeneration arguments used in \cite{CCFMnonsp},
which will also be used in this paper. The presence of points in $\HH_{d,g}$ corresponding to reducible 
surfaces was already pointed out in \cite{APS}. However te reducible surfaces we need in this paper 
are different. 

\begin{constr}[{see  \cite[Construction 5.11]{CCFMnonsp}}]\label{const:Y}
Let $g \geq 1$.  Then  $\HH_{d,g}$ contains points  $[Y]$ such that $Y$ is a reduced, connected, reducible surface, with global normal  crossings, of the form
\begin{equation}\label{eq:Y}
Y:= W \cup Q_1 \cup \cdots \cup Q_g,
\end{equation}where $W$ is a rational normal scroll, corresponding to a general point of $\HH_{d-2g,0}$,
each $Q_j$ is a smooth quadric, such that  $Q_j \cap Q_k = \emptyset$, if $1 \leq j \neq k \leq g$,
and $W \cap Q_j = l_{1,j} \cup l_{2,j}$, where  $l_{i,j}$ are general rulings of $W$, for $1 \leq i
\leq 2$, $1 \leq j \leq g$, and where the intersections are transverse.
Furthermore, for any such $Y$, one has that $h^1(Y, \N_{Y/\Pp^R}) = 0;$ in particular, $[Y]$ is a smooth point of $\HH_{d,g}$.
\end{constr}

We finish this section with the following definition and result.
\begin{definition}[{see \cite[Definition 6.1]{Ghio}}]\label{def:ghio0}
Let $C$ be a smooth, projective curve of genus $g
\geq 0$. Let $F = \Pp(\Ff)$ be a geometrically ruled surface over
$C$ and let $d = \deg(\Ff)$. For any positive integer $m$, we denote by
\begin{equation}
{\rm Div}_F^{1,m}
\end{equation}
the Hilbert scheme of unisecant curves of $F$,
which are of degree $m$ with respect to $\Oc_F(1)$; it has a natural structure as a Quot-scheme (cf.\ \cite{Groth}),
whose {\em expected dimension} is
\begin{equation}\label{eq:div1m}
d_m := {\rm max} \{ -1, \; 2 m - d - g + 1\};
\end{equation}therefore ${\rm dim}({\rm Div}_F^{1,m}) \geq d_m$.
\end{definition}
In \cite{CCFMnonsp}, we proved
\begin{theorem}[{see \cite[Theorem 6.9]{CCFMnonsp}}]\label{thm:ganzo2}
Let $g, d, \HH_{d,g}$ be as in Theorem \ref{thm:Lincei}.
If $[S] \in \HH_{d,g}$ is a general point, then $S$ is a
{\em general ruled surface}, namely, for any $m\geq1$:
\begin{enumerate}[(i)]
\item[(i)] $ {\rm dim}({\rm Div}_S^{1,m}) = d_m$, for any $m\geq1$;
\item[(ii)] ${\rm Div}_S^{1,m}$ is smooth, for any $m$ such that $d_m \geq 0$;
\item[(iv)] ${\rm Div}_S^{1,m}$ is irreducible, for any $m$ such that $d_m > 0$.
\qed
\end{enumerate}
\end{theorem}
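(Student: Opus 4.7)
The plan is to apply a degeneration argument, specializing a general $[S] \in \HH_{d,g}$ to the reducible surface $[Y] \in \HH_{d,g}$ of Construction \ref{const:Y}, and then to analyze the relative Hilbert scheme of unisecant curves along the degeneration. The Quot-scheme structure gives ${\rm dim}({\rm Div}_F^{1,m}) \geq d_m$ for every geometrically ruled $F$, so the task is to establish the reverse inequality together with the smoothness and irreducibility statements. By upper-semicontinuity of fibre dimension in flat families and the fact that smoothness and irreducibility propagate from the special fibre when combined with flatness of the ambient family, all three conclusions reduce to corresponding statements on the central fibre $Y$.

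I would first classify the points of ${\rm Div}_Y^{1,m}$. Any unisecant curve of $Y$ of degree $m$ decomposes as a unisecant $\Gamma_W$ on the rational normal scroll $W$, of some degree $m_W$, together with, on each quadric $Q_j \cong \Pp^1 \times \Pp^1$, a curve $\Gamma_j$ of bidegree $(1, a_j)$ in the ruling opposite to the one containing $l_{1,j}$ and $l_{2,j}$, subject to the matching conditions $\Gamma_j \cap l_{i,j} = \Gamma_W \cap l_{i,j}$ for $i=1,2$. The degree equation reads $m_W + \sum_{j=1}^g (a_j+1) = m$, i.e.\ $m_W + \sum a_j = m-g$. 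Degenerate configurations involving extra fibre components on $W$ or on some $Q_j$ appear in boundary strata and must be treated separately.

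For the open stratum where $a_j \geq 1$ for all $j$, the dimension count is straightforward: since $W$ is a rational normal scroll of degree $d-2g$, the $g=0$ case (classical, cf.\ \cite{GH}) gives a smooth, irreducible ${\rm Div}_W^{1,m_W}$ of dimension $2m_W - (d-2g) + 1$; on each $Q_j$, curves of bidegree $(1, a_j)$ through two prescribed points form a projective space of dimension $2a_j - 1$. Summing these contributions and substituting $m_W + \sum a_j = m-g$ yields precisely $d_m = 2m - d - g + 1$, independently of the partition $(m_W, a_1, \ldots, a_g)$. Thus every admissible partition produces a $d_m$-dimensional stratum of ${\rm Div}_Y^{1,m}$; a direct parameter count then shows that boundary strata (with fibre components, or with some $a_j = 0$ imposing an extra incidence condition on $\Gamma_W$) have dimension at most $d_m$ as well.

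Smoothness at a general point $[\Gamma]$ of each such stratum would follow from the vanishing $h^1(\Gamma, \N_{\Gamma/Y}) = 0$, computed via the decomposition of $\N_{\Gamma/Y}$ along the components of $Y$; combined with $h^1(Y, \N_{Y/\Pp^R}) = 0$ from Construction \ref{const:Y}, this guarantees that deformations of $\Gamma$ lift to nearby unisecants on smooth deformations of $Y$, hence on a general $[S] \in \HH_{d,g}$. The hardest step will be proving irreducibility of ${\rm Div}_S^{1,m}$ when $d_m > 0$: although ${\rm Div}_Y^{1,m}$ is \emph{a priori} reducible, with one component per admissible partition $(m_W, a_1, \ldots, a_g)$, all these components must merge into a single component under the smoothing. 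I would attack this by constructing, for each partition, an explicit one-parameter smoothing of a general element to a unisecant of the general nearby scroll, and then exploiting connectedness and generic smoothness of the relative Hilbert scheme over the base of the degeneration to conclude that every partition yields points in one and the same irreducible component on the general fibre.
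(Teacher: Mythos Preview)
The paper does not prove this theorem: it is quoted from \cite[Theorem~6.9]{CCFMnonsp} and closed with a \qed. The introduction does tell you that the argument in \cite{CCFMnonsp} is precisely the degeneration of a general $[S]\in\HH_{d,g}$ to the configuration $Y=W\cup Q_1\cup\cdots\cup Q_g$ of Construction~\ref{const:Y}, so your overall strategy and your dimension count on the strata of ${\rm Div}_Y^{1,m}$ are the intended ones, and the arithmetic you give for the open strata is correct.

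Two steps of your sketch, however, are not yet arguments. For part~(ii), you check $h^1(\N_{\Gamma/Y})=0$ only at a \emph{general} point of each partition stratum. Openness of smoothness for the relative Hilbert scheme then gives smoothness of the nearby fibre only over an open set; to conclude that \emph{all} of ${\rm Div}_S^{1,m}$ is smooth you must verify the $h^1$-vanishing for every limit unisecant on $Y$, including the boundary configurations with extra fibre components or with some $a_j=0$. Your opening sentence (``smoothness and irreducibility propagate from the special fibre'') is not correct as stated: smoothness is open on the total space, not on the fibre, and irreducibility goes the wrong way.

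For part~(iv) the gap is more serious. Showing that each partition stratum of ${\rm Div}_Y^{1,m}$ deforms to unisecants on the general $S$ does not prevent different strata from deforming to \emph{different} irreducible components of ${\rm Div}_S^{1,m}$; ``connectedness and generic smoothness of the relative Hilbert scheme'' is exactly what has to be proved, not assumed. What is actually needed is either a proof that ${\rm Div}_Y^{1,m}$ is connected (by linking adjacent partitions through their common boundary degenerations, e.g.\ letting a ruling component slide from $W$ onto some $Q_j$), so that the smooth relative scheme has connected and hence irreducible general fibre; or an independent connectedness argument on $S$, for instance via the map $\Phi_{m,n}$ of \eqref{eq:diaganzo} to $W_n(\Ff)\subseteq{\rm Pic}^n(C)$, whose fibres are projective spaces. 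Your last paragraph names the first route but does not carry it out.
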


\section{Brill-Noether theory}\label{S:BrillNoether}

\subsection{Preliminaries}\label{S:BN1}
Let $S \subset \Pp^R$ be a smooth, non degenerate scroll of degree $d$ and genus $g$.  Let
$(\Ff, C)$ be a pair determining $S$. Let $\Gamma$ be any unisecant curve of $S$ of degree $m$,
corresponding to the exact sequence
\begin{equation}\label{eq:Fund0}
0 \to N \to \Ff \to L \oplus \Oc_{\underline{a}} \to 0,
\end{equation}where $L$ and $N$ are line bundles and $\underline{a} \in {\rm Div}^a(C)$ such that $m = \deg(L) + a$.
Set
\begin{equation}\label{eq:n}
n := \deg(N) = d - m.
\end{equation}

In this section we will study the subschemes of ${\rm Pic}(C)$ parametrizing the sub-line bundles
$N \subset \Ff$ as in \eqref{eq:Fund0}.

\begin{definition}\label{def:seginv}
Let $C$ be a smooth, projective curve of genus $g \geq 0$ and let $\Ff$ be any
rank-two vector bundle on $C$. The {\em Segre
invariant} of $\Ff$ is defined as: $$s(\Ff) :=
\deg(\Ff) - 2 ({\rm Max} \; \{\deg (N) \}),$$where the maximum is taken
among all the sub-line bundles $N$ of $\Ff$ (cf.\ e.g.\ \cite{LN}).
We denote by $M(\Ff)$ the set of all sub-line bundles of $\Ff$
of maximal degree. Notice that $M(\Ff)$ has a natural structure of Quot-scheme
(cf.\ e.g.\ \cite{Oxb}).
\end{definition}In other words, $s(\Ff)$ is the minimum of the
self-intersections of sections
of  $F:= \Pp(\Ff)$ (cf.\ Formula \eqref{eq:Ciro410b} and
see e.g.\ \cite{LN}) and therefore, $ s(\Ff) = s(\Ff \otimes L)$, where $L$ is any line bundle.
Similarly, $M(\Ff)$ is isomorphic to $M(\Ff \otimes L)$.
Note that the vector bundle $\Ff$ is stable (resp., semi-stable) if and only if $s(\Ff) \geq 1$
(resp., $s(\Ff) \geq 0$).
In the following proposition we recall a result by Nagata, cf.\ \cite{Na}.
\begin{proposition}\label{prop:ciromarzo} Let $C$ be a smooth, projective curve of genus $g \geq 0$ and
let $\Ff$ be any rank-two vector bundle on $C$. One has:
\begin{equation}\label{eq:Nagata}
s(\Ff) \leq g.
\end{equation}
\end{proposition}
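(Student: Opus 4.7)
The plan is to prove the bound $s(\Ff) \leq g$ by contradiction, using an elementary-transformation argument. Suppose $s(\Ff) \geq g + 1$, and pick a sub-line bundle $N \subset \Ff$ of maximal degree $n$; set $L := \Ff/N$, $m := \deg L = d - n$, so that $m - n \geq g + 1$. I would first observe that the extension
\begin{equation*}
0 \to N \to \Ff \to L \to 0
\end{equation*}
has class $\xi \in \mathrm{Ext}^1(L,N) \cong H^1(C, L^\vee \otimes N)$ which is non-zero: otherwise $\Ff \cong N \oplus L$, and $L$ itself would be a sub-line bundle of degree $m > n$, contradicting the maximality of $N$.

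Next, for any effective divisor $D$ of degree $k$ on $C$, I would form the elementary transformation $\Ff_D := \ker(\Ff \twoheadrightarrow L \otimes \Oc_D)$, a rank-two bundle of degree $d - k$ fitting into
\begin{equation*}
0 \to N \to \Ff_D \to L(-D) \to 0,
\end{equation*}
whose class $\xi_D \in H^1(C, L^\vee(D) \otimes N)$ is the image of $\xi$ under the natural surjection $H^1(L^\vee \otimes N) \twoheadrightarrow H^1(L^\vee(D) \otimes N)$ induced by $L^\vee \otimes N \hookrightarrow L^\vee(D) \otimes N$. If $\xi_D = 0$ for some $D$ of degree $k \leq m - n - 1$, then $\Ff_D$ splits as $N \oplus L(-D)$; the chain $L(-D) \hookrightarrow \Ff_D \hookrightarrow \Ff$ then exhibits a sub-line bundle of $\Ff$ of degree $m - k > n$, contradicting maximality.

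The heart of the argument is to exhibit such a $D$. Via Serre duality, $H^1(L^\vee \otimes N) \cong V^\vee$ where $V := H^0(\omega_C \otimes L \otimes N^\vee)$; the assumption $m - n \geq g + 1$ gives $\deg(\omega_C \otimes L \otimes N^\vee) \geq 3g - 1$, hence this line bundle is non-special and (for $g \geq 2$) very ample, so $\dim V = g - 1 + (m - n) \geq 2g$ and $|V|$ embeds $C$ as a smooth non-degenerate curve $\phi_V(C) \subset \Pp(V^\vee)$. The condition $\xi_D = 0$ translates, via duality, to $\xi \in V^\vee$ annihilating $V_D := H^0(\omega_C \otimes L(-D) \otimes N^\vee) \subseteq V$, i.e.\ to $[\xi] \in \Pp(V^\vee)$ lying on the $(k-1)$-st secant variety of $\phi_V(C)$. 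By the classical non-defectivity of smooth non-degenerate curves (Severi), this secant variety has its expected dimension $2k - 1$ and fills $\Pp(V^\vee)$ as soon as $2k - 1 \geq \dim V - 1$, that is $k \geq (g - 1 + m - n)/2$. The inequalities $(g - 1 + m - n)/2 \leq k \leq m - n - 1$ are jointly satisfiable precisely when $m - n \geq g + 1$—our standing hypothesis—so an admissible $k$, and hence $D$, exists, producing the desired contradiction.

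The main obstacle is justifying the secant-variety step rigorously, together with the low-genus boundary cases $g = 0, 1$ (where $\omega_C \otimes L \otimes N^\vee$ may fail to be very ample and the curve embedding must be replaced by a direct cohomological splitting argument), and the parity issue when $d - g$ is odd. For $g = 0$ the theorem is immediate from the splitting of every rank-two bundle on $\Pp^1$, while for $g = 1$ the line bundle $\omega_C \otimes L \otimes N^\vee$ has degree $m - n \geq 2$, which already suffices for a non-degenerate embedding into which the secant argument can be run.
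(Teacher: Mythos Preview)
Your argument is correct and takes a genuinely different route from the paper's.

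The paper's proof is a three-line Quot-scheme argument: let $\Gamma$ be a section of $F=\Pp(\Ff)$ with $\Gamma^2=s(\Ff)$ and set $m=\deg L$. Since $m$ is the minimal unisecant degree, every curve in ${\rm Div}_F^{1,m}$ is an irreducible section; the paper then asserts $\dim({\rm Div}_F^{1,m})\le 1$, and combining this with the standard lower bound $\dim({\rm Div}_F^{1,m})\ge d_m=\Gamma^2-g+1$ gives $s(\Ff)\le g$ at once.

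Your approach is instead constructive, in the spirit of Lange--Narasimhan: assuming $s\ge g+1$, you read the extension class of $0\to N\to\Ff\to L\to 0$ as a point of $\Pp(H^0(\omega_C\otimes L\otimes N^\vee)^\vee)$ and use non-defectivity of secant varieties of curves to locate an effective $D$ of degree $k\le s-1$ for which the elementary transformation $\Ff_D$ splits, producing a sub-line bundle $L(-D)\hookrightarrow\Ff$ of degree strictly larger than $n$. The paper's proof is far shorter but leans on the Quot-scheme framework and on the bound $\dim\le 1$ (which is not entirely obvious from first principles). Yours is longer and needs the secant lemma as input, but is self-contained once that is granted and actually exhibits the offending sub-line bundle.

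Two small remarks. First, your mention of a ``parity issue when $d-g$ is odd'' is a red herring: nothing in the argument depends on parity, and your inequality $(g-1+s)/2\le s-1$ is equivalent to $s\ge g+1$ on the nose. Second, in the boundary case $g=1$, $s=2$, the line bundle $\omega_C\otimes L\otimes N^\vee$ has degree $2$ and gives a double cover of $\Pp^1$ rather than an embedding; the secant step still goes through because $\sigma_1$ of any non-degenerate curve in $\Pp^1$ is all of $\Pp^1$, so $[\xi]=\phi_V(x)$ for some $x\in C$, and $D=x$ does the job. (For non-reduced $D$ in higher genus, note that every point of $\sigma_k(\phi_V(C))$ lies in the span $\langle D\rangle$ of some length-$k$ subscheme $D\subset C$, since $\deg(L^\vee\otimes N(D))\le -1$ guarantees that any such $D$ imposes independent conditions; so the splitting argument works verbatim.)
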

\begin{proof} Let $d = {\rm deg} (\Ff)$. Let
$\Gamma$ be a section of $F = \Pp(\Ff)$, such that $\Gamma^2 = s(\Ff)$. It corresponds
to an exact sequence of type \eqref{eq:Fund0}, with $\underline{a} = 0$. Let $m = \deg(L)$, so that
$\Gamma^2= 2m -d$ (cf.\ Formula \eqref{eq:Ciro410}).
Consider ${\rm Div}_F^{1,m}$. By the assumption $\Gamma^2 = s(\Ff)$, then all the curves in ${\rm Div}_F^{1,m}$
are sections. Therefore, ${\rm dim}({\rm Div}_F^{1,m}) \leq 1$. On the other hand, by \eqref{eq:div1m},
${\rm dim}({\rm Div}_F^{1,m}) \geq d_m = 2m - d - g + 1 = \Gamma^2 - g + 1$. Hence, \eqref{eq:Nagata} follows.
\end{proof}
The proof of Proposition \ref{prop:ciromarzo} shows that sub-line bundles $N$ of $\Ff$ with
maximal degree $\overline{n}$ correspond to sections in ${\rm Div}_F^{1,\overline{m}}$, with
$0 \leq d_{\overline{m}} \leq 1$.

\begin{lemma}\label{lem:2510} Let $C$ be a curve of genus $g \geq 1$
with general moduli and let $[\Ff] \in U_C(d)$ be a general point.
Then, the line bundles in $M(\Ff)$ have degree
\begin{equation}\label{eq:nbar}
\overline{n} :=  \left\lfloor \frac{d-g+1}{2}\right\rfloor.
\end{equation}
\end{lemma}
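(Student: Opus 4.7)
I would show the two inequalities $\max_{N \subset \Ff}\deg(N) \geq \overline{n}$ and $\max_{N \subset \Ff}\deg(N) \leq \overline{n}$ separately; together they force every element of $M(\Ff)$ to have degree exactly $\overline{n}$. The lower bound is at hand from Proposition~\ref{prop:ciromarzo}: $s(\Ff) \leq g$ gives $\max\deg(N) \geq (d-g)/2$, and integrality of $\max\deg(N)$ promotes this to $\max\deg(N) \geq \lceil (d-g)/2 \rceil = \lfloor (d-g+1)/2\rfloor = \overline{n}$.

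For the upper bound, equivalent to $M_n(\Ff) = \emptyset$ for every $n > \overline{n}$, I would first reduce to the range $d \geq 2g+3+k$, $k = \min\{1, g-1\}$, in which Theorem~\ref{thm:Lincei} is available. Given any $L \in {\rm Pic}^l(C)$, the assignment $N \mapsto N \otimes L$ yields an isomorphism $M_n(\Ff) \cong M_{n+l}(\Ff \otimes L)$; tensoring by $L$ defines an isomorphism $U_C(d) \cong U_C(d+2l)$ preserving genericity, and $\overline{n}(\Ff \otimes L) = \overline{n}(\Ff) + l$. Hence one may replace $\Ff$ with $\Ff \otimes L$ for $l$ sufficiently large without loss of generality.

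With $d \geq 2g+3+k$, Theorem~\ref{thm:Lincei} ensures that the given general $[\Ff] \in U_C(d)$ arises from a general $[S] \in \HH_{d,g}$, with $S = \Pp(\Ff)$ smooth. By Theorem~\ref{thm:ganzo2}(i), $\dim({\rm Div}_S^{1,m}) = d_m = \max\{-1,\, 2m-d-g+1\}$. Setting $m := d-n$ for $n > \overline{n}$, a case-check on the parity of $d-g$ shows $2m-d-g+1 \leq -1$, so ${\rm Div}_S^{1,m} = \emptyset$. Any $N \in M_n(\Ff)$ fits into an exact sequence \eqref{eq:Fund} whose quotient $L$ defines an irreducible section of $S$ of degree $m$, hence a point of ${\rm Div}_S^{1,m}$; thus $M_n(\Ff) = \emptyset$, as needed.

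The proof is therefore short once Theorem~\ref{thm:ganzo2} is in hand, and the main substance is packed into that theorem, whose proof in \cite{CCFMnonsp} proceeds by degenerating a general $[S]$ to the reducible surface~\eqref{eq:Y} of Construction~\ref{const:Y}. No real obstacle remains: the tensoring reduction, the parity computation of $d_m$, and the dictionary between sub-line bundles of $\Ff$ and sections of $\Pp(\Ff)$ are all routine.
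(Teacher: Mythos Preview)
Your proof is correct and follows the same route as the paper's: tensor with a line bundle to reach the range of Theorem~\ref{thm:Lincei}, then apply Theorem~\ref{thm:ganzo2} together with the correspondence between sub-line bundles of $\Ff$ and unisecant curves on $S$. The only cosmetic differences are that you split off the lower bound via Proposition~\ref{prop:ciromarzo} (the paper extracts both bounds from Theorem~\ref{thm:ganzo2}), and that for the upper bound you should either first pass to the saturation of $N$ or cite \eqref{eq:Fund2} rather than \eqref{eq:Fund}, since a non-saturated $N$ yields a reducible unisecant curve---but this does not affect the argument.
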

\begin{proof} From what recalled above on $M(\Ff)$, by tensoring $\Ff$ with sufficiently large multiple of an ample
line bundle we can assume that the scroll $S$ corresponding to the pair $(\Ff,C)$ is the general point in $\HH_{d,g}$ as in
Theorem \ref{thm:Lincei}. The assertion follows from Theorem \ref{thm:ganzo2} and from \eqref{eq:Fund}.
\end{proof}

Let $n$ be any integer such that
\begin{equation}\label{eq:boundn}
n \leq \overline{n}.
\end{equation}For any such $n$, one can consider the set
\begin{equation}\label{eq:quot}
M_n(\Ff) := \{ N \subset \Ff \; | \; N \; \mbox{sub-line bundle of} \; \Ff, \; \deg(N) = n \}.
\end{equation}With this notation,  $M_{\overline{n}}(\Ff) = M(\Ff)$ as in
Definition \ref{def:seginv} (cf.\ also \cite{Oxb}).
As  for the maximal case, any $M_n(\Ff)$ has a natural structure
of Quot-scheme.

For any $[N] \in  M_n(\Ff) $, one can
define $s_N(\Ff) := {\rm deg} (\Ff) - 2\;{\rm deg}(N) $; observe that,
as for the Segre invariant, one has $s_{N\otimes L} (\Ff \otimes L) =  s_N(\Ff) $, for any
$L \in Pic(C)$.
The proof of Lemma \ref{lem:2510} shows that,
in order to study the schemes $M_n(\Ff)$, for $C$ with general
moduli and $[\Ff] \in U_C(d)$ general, we may assume that the pair $(\Ff,C)$ determines a general point in
$\HH_{d,g}$ as in Theorem \ref{thm:Lincei}. Then, one has the morphism
\begin{equation}\label{eq:psimn}
\psi_{m,n} :  {\rm Div}_S^{1,m}  \to  M_n(\Ff),
\end{equation}with $m = d -n$ as in \eqref{eq:n}, defined by$$\psi_{m,n}([\Gamma]) = [N],$$where $\Gamma$ corresponds to
$L \oplus \Oc_{\underline{a}}$ on $C$ fitting  in \eqref{eq:Fund0}. The morphism $\psi_{m,n}$ is bijective;
in fact, given $N \hookrightarrow \Ff$ one has an exact sequence of type \eqref{eq:Fund0}, which uniquely
determines the corresponding unisecant curve $\Gamma$. This defines the inverse $\psi_{m,n}^{-1}$.
In particular, $${\rm dim} ({\rm Div}_S^{1,m}) = {\rm dim} (M_n(\Ff)).$$

\begin{proposition}\label{prop:smmn}
Let $g \geq 1$ and $d$ be integers as in Theorem \ref{thm:Lincei}. Let
$n \leq \overline{n}$ and $m = d-n$ be integers.
Let $[S] \in \HH_{d,g}$ be a general point. Then $$\psi_{m,n} :  {\rm Div}_S^{1,m}  \to  M_n(\Ff)$$is
an isomorphism.
\end{proposition}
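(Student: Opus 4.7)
The plan is to upgrade the set-theoretic bijection $\psi_{m,n}$ recorded in the paragraph preceding the statement to a scheme-theoretic isomorphism by exhibiting both ${\rm Div}_S^{1,m}$ and $M_n(\Ff)$ as Quot-schemes representing the same functor on $C$. Since the general $[S] \in \HH_{d,g}$ is smooth by Theorem \ref{thm:Lincei}, the morphism $\Phi: F \to S$ of Assumptions \ref{ass:1} is an isomorphism, and I may work with $F = \Pp(\Ff)$ throughout.

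First, I would invoke the standard identification of ${\rm Div}_F^{1,m}$ with a Quot-scheme on $C$: effective Cartier divisors on $F$ of class $H + \underline{d} f$ of degree $m$ with respect to $\Oc_F(1)$ are in bijection, functorially in $T$-families, with $T$-flat surjections $\Ff \otimes \Oc_{C \times T} \twoheadrightarrow \QQ_T$, where $\QQ_T$ is a family of rank-one coherent sheaves on $C$ with fiberwise Hilbert polynomial $m - g + 1$. This identifies ${\rm Div}_F^{1,m}$ with the Grothendieck Quot-scheme of such quotients, consistent with the functorial structure already cited in Definition \ref{def:ghio0}.

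Next, I would check that $M_n(\Ff)$ represents the same functor. Given such a surjection $\Ff \otimes \Oc_{C \times T} \twoheadrightarrow \QQ_T$, the kernel is a subsheaf of a locally free sheaf, fiberwise torsion-free of rank one, hence a $T$-flat family of line bundles of degree $n = d - m$. Conversely, a $T$-flat family of sub-line bundles of degree $n$ has $T$-flat cokernel of the required type, the fiber Hilbert polynomial being constant. This yields an isomorphism of functors of points between ${\rm Div}_S^{1,m}$ and $M_n(\Ff)$ that sends a quotient to its kernel, precisely the prescription defining $\psi_{m,n}$ on closed points; by the Yoneda lemma, $\psi_{m,n}$ itself is an isomorphism of schemes.

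I do not expect a serious obstacle here: the argument is a careful unwinding of the Quot-scheme formalism. The one point requiring attention is the $T$-flatness of kernels and cokernels in families, which follows from the constancy of the fiber Hilbert polynomials together with the standard Noetherian flatness criterion.
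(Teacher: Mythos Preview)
Your approach is correct and takes a genuinely different route from the paper's. You identify both ${\rm Div}_S^{1,m}$ and $M_n(\Ff)$ with the Quot-scheme ${\rm Quot}^{1,m}_{\Ff/C}$ of rank-one, degree-$m$ quotients of $\Ff$ and conclude by Yoneda; apart from using smoothness of $S$ to replace $S$ by $F$, your argument never invokes the generality of $[S]\in\HH_{d,g}$, so it actually proves the isomorphism for \emph{every} pair $(\Ff,C)$. The paper, by contrast, keeps the set-theoretic bijection $\psi_{m,n}$ as a black box and upgrades it to an isomorphism by proving that $M_n(\Ff)$ is \emph{smooth}: for each $[N]\in M_n(\Ff)$ with quotient $L\oplus\Oc_{\underline a}$, it factors $N\hookrightarrow N'$ through the kernel of $\Ff\twoheadrightarrow L$, identifies $L\otimes(N')^{\vee}$ with the normal bundle $\N_{\Gamma/S}$ of the section $\Gamma$ corresponding to $L$, and then applies the vanishing $h^1(\N_{\Gamma/S})=0$ from Theorem~\ref{thm:ganzo2} to get ${\rm Ext}^1(N,\Ff/N)\cong H^1(L\otimes N^{\vee})=0$. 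This route genuinely needs $[S]$ general (via Theorem~\ref{thm:ganzo2}), but it yields as a byproduct the vanishing $h^1((L\oplus\Oc_{\underline a})\otimes N^{\vee})=0$, which is quoted again later in the proof of Proposition~\ref{prop:ganzo3}. Your argument recovers smoothness of $M_n(\Ff)$ \emph{a posteriori} from the isomorphism together with Theorem~\ref{thm:ganzo2}, and the same cohomology vanishing can then be read off as the vanishing of the Quot-scheme obstruction space, so nothing is lost.
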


\begin{proof} Since $M_n(\Ff)$ is a Quot-scheme, it is smooth at those points $[N] \in M_n(\Ff)$ such that
${\rm Ext}^1(N, \Ff/N) = (0)$. From \eqref{eq:Fund0}, ${\rm Ext}^1(N, \Ff/N) \cong
H^1( (L \oplus \Oc_{\underline{a}}) \otimes N^{\vee}) \cong H^1(L  \otimes N^{\vee}).$
Let  $[\Gamma_1] \in {\rm Div}_S^{1,m}$ be the unisecant curve as in \eqref{eq:Fund0}.
$\Gamma_1$ is of the form
$$\Gamma_1 = \Gamma \cup l_1 \cup \cdots \cup l_a, \;\; a = \deg(\underline{a}),$$where
$[\Gamma] \in {\rm Div}_S^{1,m-a}$ is a section and the $l_i$'s are lines of the ruling.
From the inclusion of schemes $\Gamma \subset \Gamma_1$,
we get
\begin{equation}\label{eq:surj}
L \oplus \Oc_{\underline{a}} \twoheadrightarrow L.
\end{equation}
Therefore, the section $\Gamma$ corresponds to a sequence
$$0 \to N' \to \Ff \to L  \to 0,$$
where $N'$ is a line bundle on $C$ of degree $n' = n + a$. Moreover,
from \eqref{eq:surj}, it follows that
\begin{equation}\label{eq:inj}
N \hookrightarrow N'.
\end{equation}Since $H^1(L \otimes N^{\vee}) \cong H^0( \omega_C \otimes L^{\vee} \otimes N)^{\vee}$, by \eqref{eq:inj}
we have
\begin{equation}\label{eq:inj2}
H^0( \omega_C \otimes L^{\vee} \otimes N) \hookrightarrow H^0( \omega_C \otimes L^{\vee} \otimes N').
\end{equation}From \eqref{eq:Ciro410b},
$$L \otimes (N')^{\vee} \cong \N_{\Gamma/S}$$and
$h^1(\N_{\Gamma/S}) = 0$, for any $[\Gamma] \in {\rm Div}_S^{1,m-a}$ (cf.\ Theorem \ref{thm:ganzo2}).
This implies that $H^1(L \otimes N^{\vee}) = (0)$ so $M_n(\Ff)$ is smooth.
Since $\psi_{m,n}$ is bijective, it is an isomorphism (cf.\ \cite[Exercise I, 3.3]{Ha}).
\end{proof}
As an immediate consequence of Proposition \ref{prop:smmn} and of the
proof of Lemma \ref{lem:2510}, we have the following:
\begin{corollary}\label{cor:2510b} Let  $C$ be a curve of genus $g \geq 1$ with general moduli and
$[\Ff] \in U_C(d)$ be a general point. Let $ n \leq \overline{n}$ and $m = d-n$ be integers. Then
$ M_n(\Ff)$ is smooth, of dimension $d_m$ and it is irreducible when $d_m >0$.
\end{corollary}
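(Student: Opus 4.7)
The strategy is a direct concatenation of the results already in place: the isomorphism of Proposition \ref{prop:smmn} transports the assertions of Theorem \ref{thm:ganzo2} from $\mathrm{Div}_S^{1,m}$ to $M_n(\Ff)$, once one has reduced to a scroll situation to which Theorem \ref{thm:ganzo2} applies.

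The first step is a reduction. Starting from a curve $C$ with general moduli and a general $[\Ff] \in U_C(d)$, I would argue, exactly as in the proof of Lemma \ref{lem:2510}, that after tensoring $\Ff$ with a sufficiently large multiple of an ample line bundle $L$ on $C$ we may assume the associated scroll $S = \Pp(\Ff)$ is embedded linearly normally in $\Pp^R$ as a general point of $\HH_{d,g}$ (so that Theorem \ref{thm:Lincei} applies). This reduction is harmless for the statement, because $M_n(\Ff)$ is isomorphic (as a Quot-scheme) to $M_{n+l}(\Ff \otimes L)$ for $L \in \mathrm{Pic}^l(C)$, and the numerical invariant $d_m$ is preserved under this shift (the degree $d$ and $m$ both change, but $2m-d-g+1$ does not).

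The second step is to verify that the hypothesis $n \leq \overline{n}$ forces $d_m \geq 0$, so we are in a range where Theorem \ref{thm:ganzo2} gives smoothness. Writing $m = d - n$, one has
\begin{equation*}
d_m = 2m - d - g + 1 = d - 2n - g + 1 \geq d - 2\overline{n} - g + 1 \geq 0,
\end{equation*}
where the last inequality uses $2\overline{n} \leq d-g+1$, which follows immediately from $\overline{n} = \lfloor (d-g+1)/2 \rfloor$. In particular $d_m \geq 0$ always, and $d_m > 0$ exactly when $n < \overline{n}$ (or when $\overline{n}$ is strict, i.e.\ $d-g$ is odd).

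For the third and final step, I apply Proposition \ref{prop:smmn}, which provides an isomorphism of schemes $\psi_{m,n} \colon \mathrm{Div}_S^{1,m} \xrightarrow{\sim} M_n(\Ff)$ for the general $[S] \in \HH_{d,g}$ obtained in the reduction. Theorem \ref{thm:ganzo2} then gives $\dim(\mathrm{Div}_S^{1,m}) = d_m$, smoothness for $d_m \geq 0$, and irreducibility for $d_m > 0$; transporting via $\psi_{m,n}$ yields the corresponding statements for $M_n(\Ff)$. There is really no hard step here: the content is already packaged in Proposition \ref{prop:smmn} and Theorem \ref{thm:ganzo2}, and the only thing to check carefully is that the bound $n \leq \overline{n}$ places us squarely in the regime where those theorems deliver the three conclusions simultaneously.
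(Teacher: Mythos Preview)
Your proposal is correct and follows exactly the approach indicated in the paper: the reduction from the proof of Lemma \ref{lem:2510} together with the isomorphism of Proposition \ref{prop:smmn} transports the conclusions of Theorem \ref{thm:ganzo2} to $M_n(\Ff)$. The only minor slip is in your parenthetical remark: at $n=\overline{n}$ one has $d_m=1$ when $d-g$ is even and $d_m=0$ when $d-g$ is odd, the reverse of what you wrote, but this aside plays no role in the argument.
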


Moreover, we have the following result (cf.\ \cite{Ma}, \cite[Corollary 3.2]{LN} and
\cite[Proposition 1.4, Theorem 3.1, Example 3.2]{Oxb}).

\begin{corollary}\label{thm:maruyama} Let $C$ be a smooth, projective  curve of genus
$g \geq 1$ and let $\Ff$ be a rank-two vector bundle of degree $d$ on $C$.
One has:
\begin{itemize}
\item[(a)] if $s(\Ff) = g$, then $ {\rm dim} (M(\Ff)) = 1$, $d - g$ is even and $\overline{n} = \frac{d-g}{2}$.
\item[(b)] if $C$ has general moduli, $[\Ff] \in U_C(d)$ general and  $s(\Ff) \leq g-1$, then
${\rm dim}(M(\Ff)) =0$, $s(\Ff) = g-1$, $d - g$ is odd and $\overline{n} = \frac{d-g +1 }{2}$.
\end{itemize}
\end{corollary}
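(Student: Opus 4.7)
The plan is to assemble the corollary from three already-available ingredients: the tautological identity $s(\Ff) = d - 2\overline{n}$, the upper-bound argument contained in the proof of Proposition \ref{prop:ciromarzo}, and the dimension count of Corollary \ref{cor:2510b} (which in turn rests on Lemma \ref{lem:2510} and Theorem \ref{thm:ganzo2}).

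For part (a), I would set $F = \Pp(\Ff)$ and $\overline{m} = d - \overline{n}$. The equality $s(\Ff) = g$ combined with $s(\Ff) = d - 2\overline{n}$ immediately yields $\overline{n} = (d-g)/2$, and hence $d - g$ is even. For the dimension of $M(\Ff)$, I would first note, exactly as in the proof of Proposition \ref{prop:ciromarzo}, that the maximality of $\overline{n}$ forces every unisecant curve in ${\rm Div}_F^{1,\overline{m}}$ to be an (irreducible) section: a decomposition $B + \underline{a} f$ with $a \geq 1$ would exhibit $B$ as corresponding to a sub-line bundle of degree $\overline{n} + a$, contradicting maximality. This is precisely the hypothesis used in Proposition \ref{prop:ciromarzo} to conclude $\dim {\rm Div}_F^{1,\overline{m}} \leq 1$, while \eqref{eq:div1m} supplies the opposite inequality
$$ \dim {\rm Div}_F^{1,\overline{m}} \geq d_{\overline{m}} = 2\overline{m} - d - g + 1 = s(\Ff) - g + 1 = 1. $$
The set-theoretic bijection between ${\rm Div}_F^{1,\overline{m}}$ and $M(\Ff)$ produced by \eqref{eq:Fund} (valid whenever every element of the former is a section) then transfers this to $\dim M(\Ff) = 1$.

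For part (b), the moduli-generality hypotheses let me invoke Lemma \ref{lem:2510}, giving $\overline{n} = \lfloor (d-g+1)/2 \rfloor$. Consequently $s(\Ff) = d - 2\overline{n}$ takes one of only two values: $g$ when $d-g$ is even, and $g-1$ when $d-g$ is odd. The assumption $s(\Ff) \leq g - 1$ thus forces $s(\Ff) = g-1$, so $d - g$ is odd and $\overline{n} = (d-g+1)/2$. Finally, Corollary \ref{cor:2510b} applied to the pair $(n, m) = (\overline{n}, \overline{m})$ gives $\dim M(\Ff) = d_{\overline{m}} = s(\Ff) - g + 1 = 0$.

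The only delicate step is the upper bound $\dim {\rm Div}_F^{1,\overline{m}} \leq 1$ used in (a); since it already appears inside the proof of Proposition \ref{prop:ciromarzo}, I would cite that argument rather than reprove it, and the remainder of the corollary is bookkeeping.
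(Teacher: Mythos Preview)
Your proposal is correct and follows essentially the same route as the paper's own proof. Both arguments reduce to the pair of inequalities $1 \geq \dim({\rm Div}_F^{1,\overline{m}}) \geq d_{\overline{m}}$ coming from the proof of Proposition~\ref{prop:ciromarzo}, together with the identity $d_{\overline{m}} = s(\Ff) - g + 1$, and for part (b) both rely on Lemma~\ref{lem:2510}/Corollary~\ref{cor:2510b} to pin down $\overline{n}$ and the dimension; you simply spell out the parity dichotomy for $s(\Ff)$ more explicitly than the paper does.
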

\begin{proof} As usual, we may assume that $(\Ff,C)$ corresponds to a point in $\HH_{d,g}$.
Let $\overline{m} := d - \overline{n}$.

\noindent
(a) One has $ 1 \geq {\rm dim} (M(\Ff)) = {\rm dim}({\rm Div}_F^{1,\overline{m}}) \geq
d_{\overline{m}} =  1$ (cf.\ the proof of Proposition \ref{prop:ciromarzo}). The assertion follows.
\noindent
(b) By the generality assumptions, one has $ {\rm dim} (M(\Ff)) =
{\rm dim}({\rm Div}_F^{1,\overline{m}}) \geq
d_{\overline{m}} =  0$. The assertion follows.
\end{proof}

The following corollary proves a particular case of \cite[Conjecture 2.8]{Oxb}.
\begin{corollary}\label{rem:conjecture} Let $C$ be any smooth, projective curve of genus
$g \geq 1$. Let $d$ be an integer  such that $d-g$ is even. Let
$[\Ff] \in U_C(d)$ be general. Then $M(\Ff)$ is a
connected curve.
\end{corollary}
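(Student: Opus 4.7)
The plan is to deduce the conclusion from Corollary \ref{cor:2510b}---which already gives irreducibility of $M_{\overline n}(\Ff)$ when the underlying curve has general moduli---via a relative Stein-factorization argument of Zariski-connectedness type.

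First, I would pin down the dimension. For a general $[\Ff]\in U_C(d)$ the condition $s(\Ff)=g$ holds, being open in $U_C(d)$ (by lower semicontinuity of the Segre invariant) and nonempty by Nagata's theorem; since $d-g$ is even, Corollary \ref{thm:maruyama}(a) then yields $M(\Ff)=M_{\overline n}(\Ff)$ with $\overline n=(d-g)/2$, a scheme of pure dimension $1$. It remains to prove that $M(\Ff)$ is connected.

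Next, using that $\mathcal{M}_g$ is irreducible, I would introduce a smooth irreducible base $T$ carrying a smooth family $\pi:\mathcal{C}\to T$ of genus-$g$ curves, with $\pi^{-1}(t_0)\cong C$ at some $t_0\in T$ and with $\mathcal{C}_t$ of general moduli for a generic $t\in T$. Form the relative moduli space $p:\mathcal{U}\to T$ of semistable rank-two, degree-$d$ bundles on the fibres of $\pi$, denote by $\mathcal{U}^s\subseteq\mathcal{U}$ the open locus of stable bundles (smooth, irreducible, hence normal), and form the relative Quot scheme $q^s:\mathcal{Q}^s\to\mathcal{U}^s$ whose fibre over $(t,[\Ff])$ is $M_{\overline n}(\Ff)$; this $q^s$ is proper. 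By Corollary \ref{cor:2510b}, the fibre of $q^s$ over a general point of $\mathcal{U}^s$ (representing a general bundle on a curve of general moduli) is irreducible, hence connected. Taking the Stein factorization $q^s=h\circ g$, with $g$ having geometrically connected fibres and $h:\mathcal{Z}^s\to\mathcal{U}^s$ finite, one sees that $h$ is finite surjective and generically one-to-one; normality of $\mathcal{U}^s$ together with Zariski's main theorem then forces $h$ to be an isomorphism, so every fibre of $q^s$ is connected. Since a general $[\Ff]\in U_C(d)$ lies in $\mathcal{U}^s$ (because $s(\Ff)=g\geq 1$ makes $\Ff$ stable), the fibre of $q^s$ over such a point---namely $M(\Ff)$---is connected, as required.

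The main obstacle is the insistence on a normal base: at strictly semistable bundles the coarse moduli space $U_C(d)$ can fail to be normal and Zariski's main theorem ceases to apply, so Stein factorization could a priori introduce spurious extra points in special fibres of $h$ and thereby destroy connectedness. Restricting to $\mathcal{U}^s$ circumvents this and is harmless, since the generality hypothesis $s(\Ff)=g\geq 1$ already places the bundles of interest inside the stable locus.
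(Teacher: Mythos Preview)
Your argument is correct and follows the same overall strategy as the paper---deduce connectedness for arbitrary $C$ from the irreducibility established in Corollary~\ref{cor:2510b} for general moduli, via specialization---but the implementation differs. The paper invokes \cite[Theorem~3.1]{Oxb} to see that the numerical class of $M(\Ff)$ is independent of $C$, so that each $M(\Ff)$ is an honest flat limit of smooth irreducible curves and hence connected. You instead build the relative Quot scheme over the normal stable locus and apply Stein factorization together with Zariski's Main Theorem directly; since Stein factorization requires only properness (not flatness), this bypasses the appeal to Oxbury's class formula. Your route is therefore a bit more self-contained, at the price of having to set up the relative picture carefully.

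Two technical points deserve attention. First, Nagata's inequality (Proposition~\ref{prop:ciromarzo}) gives only the bound $s(\Ff)\le g$; it does not show that the locus $\{s(\Ff)=g\}\subset U_C(d)$ is nonempty for an \emph{arbitrary} curve $C$. That nonemptiness---equivalently, that a general $[\Ff]\in U_C(d)$ has $s(\Ff)=g$ when $d-g$ is even---is due to Maruyama and Lange--Narasimhan (cf.\ \cite{Ma}, \cite{LN}, and Remark~\ref{rem:2allag}). Second, forming the relative Quot scheme $q^s\colon\mathcal{Q}^s\to\mathcal{U}^s$ presupposes a universal bundle on $\mathcal{C}\times_T\mathcal{U}^s$, which need not exist globally when $d$ is even. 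You should either note that the \'etale-local universal bundles differ by line bundles pulled back from $\mathcal{U}^s$ (so the associated Quot schemes glue canonically), or---cleaner in this context---replace $\mathcal{U}^s$ by a suitable smooth open subset of the Hilbert scheme $\HH_{d',g}$ (after twisting so that $d'$ satisfies the bounds of Theorem~\ref{thm:Lincei}), where the universal family is tautological and normality is immediate.
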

\begin{proof} By Corollary \ref{cor:2510b}, $M(\Ff)$ is a smooth and irreducible  curve if  $C$ has general moduli.
On the other hand, since we are in in case $(a)$ of Corollary \ref{thm:maruyama}, then
$M(\Ff)$ is in any case a curve. Now, \cite[Theorem 3.1]{Oxb} implies that the numerical equivalence
class of $M(\Ff)$  is independent on $C$. Therefore, $M(\Ff)$, as a limit of a smooth, irreducible curve,
is connected.
\end{proof}

\begin{remark}\label{rem:2allag}
Note that, in case $(b)$ of Corollary \ref{thm:maruyama}, Maruyama proves more, i.e.\ he assumes $C$ to be any
curve, $d$ any positive integer and $[\Ff] \in U_C(d)$ general. Furthermore, in this case,
$ M(\Ff)$ consists of $2^g$ distinct elements (cf.\ \cite[Theorem 16]{Tu},
\cite[Corollary 3.2]{LN} and \cite[Proposition 1.4, Theorem 3.1, Example 3.2]{Oxb}; see also
Proposition \ref{prop:smmn} and \cite[Theorem 7.1.1]{CCFMnonsp}).
Thus, when $d - g$ is odd, one has a rational map
\[
\lambda\colon U_C^s(d) \dashrightarrow Sym^{2^g}({\rm Pic}^{\frac{d-g}{2}}(C)).
\]
For $g =1$, $\lambda$ is everywhere defined and it is an isomorphism (cf.\ e.g.\ \cite[Remark 5.5]{CCFMnonsp}).
As soon as $g \geq 2$, ${\rm dim}(U_C(d)) < {\rm dim}( Sym^{2^g}({\rm Pic}^{\frac{d-g}{2}}))$. Natural questions are:
\begin{itemize}
\item is $\lambda$ injective?
\item is $d\lambda$ injective  where $\lambda$ is defined?
\end{itemize}Affirmative answers would give (global and infinitesimal) Torelli type theorems.
\end{remark}

\subsection{The Brill-Noether loci}\label{S:BN2}

As in \cite[\S\,1]{Oxb}, for any $n \leq \overline{n}$
one can consider the natural morphism
\begin{equation}\label{eq:pn}
\pi_n : M_n(\Ff) \to {\rm Pic}^n(C)
\end{equation}sending any sub-line bundle $N \subset \Ff$ of degree $n$ to $[N] \in {\rm Pic}^n(C)$. We shall denote by
\begin{equation}\label{eq:wn}
W_n(\Ff) := Im(\pi_n) \subseteq {\rm Pic}^n(C)
\end{equation}(cf.\ \cite[Theorem 3]{GhioBu}, \cite{GhioBrill} and \cite{Oxb}, where $W_{\overline{n}}(\Ff)$ is
denoted by $W(\Ff)$).
The map $\pi_n$ can be viewed as an analogue of the classical Abel-Jacobi map and
$M_n(\Ff)$ has to be viewed as an analogue of the symmetric product of the curve $C$.

\begin{lemma}\label{lem:fibres}
For any $[N] \in W_n(\Ff)$,$$\pi_n^{-1}([N]) \cong \Pp(H^0(\Ff \otimes N^{\vee})).$$In particular,
$\pi_n$ has connected fibres.
\end{lemma}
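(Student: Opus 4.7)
The plan is to produce a natural bijection between $\pi_n^{-1}([N])$ and $\Pp(H^0(\Ff \otimes N^{\vee}))$ by identifying a sub-line bundle inclusion $N \hookrightarrow \Ff$ with the datum of a non-zero homomorphism $N \to \Ff$ up to scalar. Fix once and for all a line bundle representative $N$ of the class $[N] \in {\rm Pic}^n(C)$. A point of $\pi_n^{-1}([N])$ is, by definition, a sub-line bundle $N' \subset \Ff$ of degree $n$ with $N' \cong N$; after choosing such an isomorphism the inclusion becomes a non-zero element $s \in {\rm Hom}(N,\Ff) = H^0(C, \Ff \otimes N^{\vee})$, and since ${\rm Aut}(N) = \C^\ast$, the element $s$ is determined by $N'$ up to a non-zero scalar, hence defines a well-defined point of $\Pp(H^0(\Ff \otimes N^{\vee}))$.

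For the reverse direction, I need to check that every non-zero $s \in H^0(\Ff \otimes N^{\vee})$, viewed as a sheaf map $N \to \Ff$, is injective; this is the only step requiring a small argument and is the main (if minor) obstacle. The kernel $\ker(s)$ is a subsheaf of the torsion-free rank-one sheaf $N$, so it is either $0$ or $N$ itself. If $\ker(s) = N$ then $s = 0$, contradicting the assumption; and if $\ker(s)$ were non-zero but proper, the image $s(N) \cong N/\ker(s)$ would be torsion, contradicting its embedding into the torsion-free sheaf $\Ff$. Hence $s$ embeds $N$ as a sub-line bundle of $\Ff$ of degree $n$, and the two constructions are mutually inverse.

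This yields the desired bijection $\pi_n^{-1}([N]) \cong \Pp(H^0(\Ff \otimes N^{\vee}))$. To upgrade it to an isomorphism of schemes, one restricts the universal subsheaf on $M_n(\Ff) \times C$ to $\pi_n^{-1}([N]) \times C$ and verifies via the universal property of the Quot-scheme that the fiber represents the projectivization functor associated to $H^0(\Ff \otimes N^{\vee})$. The connectedness of the fibres of $\pi_n$ is then immediate, since every projective space is irreducible and hence connected.
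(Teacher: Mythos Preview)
Your argument is correct and follows the same approach as the paper, which simply observes that sub-line bundles of $\Ff$ isomorphic to $N$ correspond to non-zero global sections of $\Ff \otimes N^{\vee}$ up to scalars. You have merely spelled out the details (the injectivity of a non-zero $s\colon N\to\Ff$, the role of ${\rm Aut}(N)=\C^\ast$, and the scheme-theoretic upgrade) that the paper leaves implicit in its one-line proof.
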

\begin{proof} This follows from the definition of $W_n(\Ff)$ (cf.\ \cite[p.\ 11]{Oxb}, for $n = \overline{n}$).
Indeed, $[N] \in W_n(\Ff)$ iff $[N] \in {\rm Pic}^n(C)$ is a sub-line bundle of $\Ff$, equivalently, iff there exists
a non-zero global section in $H^0(\Ff \otimes N^{\vee})$.
\end{proof}
\begin{remark}\label{rem:ganzo3} {\normalfont Recalling
\eqref{eq:psimn}, we have the commutative diagram
\begin{equation}\label{eq:diaganzo}
\xymatrix{{\rm Div}_S^{1,m} \ar[r]^{\psi_{m,n}} \ar[dr]!UL_{\Phi_{m,n}}
  & M_n(\Ff) \ar[d]^{\pi_n} \\
  & W_n(\Ff)
}%
%\begin{array}{ccc}
%{\rm Div}_S^{1,m} & \stackrel{\psi_{m,n}}{\longrightarrow} & M_n(\Ff) \\
%            & \stackrel{\Phi_{m,n}}{\searrow}     & \downarrow^{\pi_n} \\
%            &                                    & W_n(\Ff).
%\end{array}
\end{equation}
For any $[N] \in W_n(\Ff)$ and  $[\Gamma_N] = \psi_{m,n}^{-1}(N)$,
we have
\begin{equation}\label{eq:linsys}
\Pp(H^0(\Ff \otimes N^{\vee})) \cong |\Oc_S(\Gamma_N)|,
\end{equation}i.e., the fibres of $\pi_n$ can be identified with linear systems of unisecant curves of degree
$m = d-n$ on $S$.
}
\end{remark}

The above setting suggests the definition of Brill-Noether type loci in $W_n(\Ff)$. One proceeds
as follows. For any integer $p \geq 0$,
one defines the {\em Brill-Noether locus}
\begin{equation}\label{eq:wnp}
W_n^p(\Ff) := \{ [N] \in {\rm Pic}^n(C) \, | \, h^0(\Ff \otimes N^{\vee}) \geq p+1 \}.
\end{equation}Since $[\Ff] \in U_C(d)$ is general, this is a degeneracy-locus of  a suitable vector bundle map
on ${\rm Pic}^n(C)$ and, as such, has a natural scheme structure
(cf.\ the construction in \cite[pp.\ 11-12]{Oxb}, for the case $n = \overline{n}$,
which extends to any $n \leq \overline{n}$).
In particular, for any $n \leq \overline{n}$, $W(\Ff) = W^0_n(\Ff)$ and there is a filtration
\begin{equation}\label{eq:filtration}
\emptyset = W^{k+1}_n(\Ff)\subset W^k_n(\Ff) \subseteq W^{k-1}_n(\Ff) \subseteq \cdots
\subseteq W^2_n(\Ff) \subseteq W^1_n(\Ff) \subseteq W^0_n(\Ff) = W_n(\Ff),
\end{equation} for some $k \geq 0$ (cf.\ \cite{GhioBrill}). Note that, for any
$p \geq 0$, $W^{p+1}_n(\Ff)$ is contained in the singular locus of $W^p_n(\Ff)$.
Recalling Remark \ref{rem:ganzo3}, we see that the pull-back via $\Phi_{m,n}$ of $W_n^p(\Ff) $ is
\begin{equation}\label{eq:div1mp}
{\rm Div}_S^{1,m}(p) := \{ [\Gamma] \in {\rm Div}_S^{1,m} \;\; | \;\;\, {\rm dim}(|\Oc_S(\Gamma)|) \geq p \},
\end{equation}which is a subscheme of ${\rm Div}^{1,m}_S$ (cf.\ \cite[p.\ 68]{GhioBu}).
Via the isomorphism $\psi_{m,n}$, the scheme ${\rm Div}^{1,m}_S(p)$ can be identified with
\begin{equation}\label{eq:iso2}
M_n^p(\Ff) := \{ N \subset \Ff \, | \, \deg(N) = n \; {\rm and} \; h^0(\Ff \otimes N^{\vee}) \geq p+1 \},
\end{equation}which is the subscheme of $M_n(\Ff)$ pull-back of $W_n^p(\Ff)$ via $\pi_n$.

We recall the following proposition from \cite[Theorems 2, 3]{GhioBu}, \cite{GhioBrill} (see also
\cite[Lemma 2.2]{Oxb}, for the case $n = \overline{n}$):
\begin{proposition}\label{prop:classes}
Let $d_m $ be as in \eqref{eq:div1m}. For any integer $p \geq 0$, let
\begin{equation}\label{eq:taup}
\tau_p(\Ff) := {\rm max} \{ -1, \; g - (p+1) (p + g - d_m)\}.
\end{equation}If $W_n^p(\Ff) \neq \emptyset$, then
\begin{equation}\label{eq:ghiobrill}
{\rm dim} (W_n^p(\Ff)) \geq \; {\rm min}\,\{g, \tau_p(\Ff)\},
\end{equation}where the right-hand-side is the
{\em expected dimension} of $W_n^p(\Ff)$. In particular, with $d$ as in Theorem \ref{thm:Lincei}, one has:
\begin{itemize}
\item[(i)] if $0 \leq \tau_p(\Ff) < g$, then $${\rm dim} ({\rm Div}_S^{1,m}(p))  \geq \tau_p(\Ff) + p =:
{\rm expdim} ( {\rm Div}_S^{1,m}(p)),$$whereas,
\item[(ii)]  if $\tau_p(\Ff) = g$, then for any $p_0 \leq p$, one has
$$W_n^{p_0}(\Ff) = {\rm Pic}^n(C) \; {\rm and} \;
{\rm Div}_S^{1,m}(p_0) = {\rm Div}_S^{1,m};$$furthermore, the general fibre of $\Phi_{m,n}$ has dimension
$d_m - g = 2m - d - 2g + 1$.
\end{itemize}If, moreover, the equality in \eqref{eq:ghiobrill} holds with $0 \leq \tau_p(\Ff) < g$,
then the class in ${\rm Pic}^n(C)$ of $W_n^p(\Ff)$ is
\[
[W_n^p(\Ff)] \equiv \left(\prod_{i=0}^p \frac{i!}{(p+g+i-d_m)!}\right) \cdot 2 ^{g - \tau_p(\Ff)} \cdot\theta^{p - \tau_p(\Ff)},
\]where $\equiv$ denotes the numerical equivalence of cycles
and $\theta$ is the class of the theta divisor in ${\rm Pic}^n(C)$.
\qed
\end{proposition}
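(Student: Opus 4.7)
The plan is to realize $W_n^p(\Ff)$ as a Thom-Porteous degeneracy locus on ${\rm Pic}^n(C)$ and then invoke the Fulton-Lazarsfeld existence theorem, completed by a Chern-class calculation for the class formula. First, fix a Poincar\'e line bundle $\mathcal{N}$ on $C \times {\rm Pic}^n(C)$ and, writing $q$ for the second projection, choose an effective divisor $D$ on $C$ of sufficiently large degree so that $R^1 q_*\bigl(\Ff(D) \boxtimes \mathcal{N}^{\vee}\bigr) = 0$. Pushing forward the short exact sequence
\begin{equation*}
0 \to \Ff \boxtimes \mathcal{N}^{\vee} \to \Ff(D) \boxtimes \mathcal{N}^{\vee} \to \bigl(\Ff(D) \boxtimes \mathcal{N}^{\vee}\bigr)\big|_{D \times {\rm Pic}^n(C)} \to 0
\end{equation*}
produces a morphism $\alpha : E_1 \to E_2$ of locally free sheaves on ${\rm Pic}^n(C)$ whose fibre-wise kernel and cokernel at $[N]$ compute $H^0(\Ff \otimes N^{\vee})$ and $H^1(\Ff \otimes N^{\vee})$ respectively. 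Since $\Ff$ has rank two, $\chi(\Ff \otimes N^{\vee}) = d - 2n + 2 - 2g = d_m + 1 - g$ by \eqref{eq:div1m}, so $r_2 - r_1 = g - d_m - 1$, and the condition $h^0(\Ff \otimes N^{\vee}) \geq p+1$ is precisely $\operatorname{rank}(\alpha) \leq r_1 - (p+1)$. This exhibits $W_n^p(\Ff)$ as a degeneracy locus of expected codimension $(p+1)(r_2 - r_1 + p + 1) = (p+1)(p + g - d_m)$, i.e.\ of expected dimension $\tau_p(\Ff)$.

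The existence bound \eqref{eq:ghiobrill} then follows, in the range $0 \leq \tau_p(\Ff) < g$, from Fulton-Lazarsfeld's theorem on degeneracy loci (or, in this concrete setting, from the Kempf-Kleiman-Laksov argument): every non-empty component of $W_n^p(\Ff)$ has dimension at least $\tau_p(\Ff)$. For $\tau_p(\Ff) = g$, i.e.\ $d_m \geq p + g$, Riemann-Roch is already enough: $h^0(\Ff \otimes N^{\vee}) \geq \chi(\Ff \otimes N^{\vee}) = d_m + 1 - g \geq p + 1$ for every $N \in {\rm Pic}^n(C)$, so $W_n^{p_0}(\Ff) = {\rm Pic}^n(C)$ and, via \eqref{eq:iso2}, ${\rm Div}_S^{1,m}(p_0) = {\rm Div}_S^{1,m}$ for every $p_0 \leq p$. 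Moreover, for generic $N$ one also has $h^1(\Ff \otimes N^{\vee}) = 0$, so Lemma \ref{lem:fibres} combined with \eqref{eq:linsys} identifies the general fibre of $\Phi_{m,n}$ with $\Pp(H^0(\Ff \otimes N^{\vee}))$, of dimension $d_m - g = 2m - d - 2g + 1$.

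When equality holds in \eqref{eq:ghiobrill} with $0 \leq \tau_p(\Ff) < g$, the Thom-Porteous formula applied to $\alpha$ expresses $[W_n^p(\Ff)]$ as the $(p+1)\times(p+1)$ determinant $\det\bigl(c_{p+g-d_m-i+j}(E_2 - E_1)\bigr)_{1 \leq i,j \leq p+1}$. Using the K-theoretic identity $c(E_2 - E_1) = c\bigl(-Rq_*(\Ff \boxtimes \mathcal{N}^{\vee})\bigr)$, Grothendieck-Riemann-Roch gives an explicit formula in which the rank of $\Ff$ enters as the factor accounting for $2^{g - \tau_p(\Ff)}$, parallel to the calculation of $[W^r_d]$ in Arbarello-Cornalba-Griffiths-Harris (Chapter VIII) for classical Brill-Noether loci. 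Performing the standard determinantal simplifications then yields the claimed product of factorials times $2^{g-\tau_p(\Ff)}\,\theta^{p-\tau_p(\Ff)}$.

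The main obstacle is precisely this last computation. While all the individual ingredients, Grothendieck-Riemann-Roch on ${\rm Pic}^n(C)$, Thom-Porteous, and the resulting determinantal combinatorics, are classical, extracting the precise Chern character of $Rq_*(\Ff \boxtimes \mathcal{N}^{\vee})$ for a rank-two, possibly indecomposable $\Ff$ and then tracking the factor $2^{g-\tau_p(\Ff)}$ through the determinantal simplification is where the calculation is easiest to slip up. This is the technical heart of the result and where one really needs the explicit bookkeeping carried out in Ghio-Buono and Ghio.
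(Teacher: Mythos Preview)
Your approach is correct in outline and is, in fact, essentially the approach taken in the references the paper cites. Note, however, that the paper itself gives \emph{no} proof of this proposition: the statement ends with a \qed, and the sentence immediately following it reads ``For the proof of Proposition~\ref{prop:classes}, see \cite{GhioBrill}. In \cite[Theorems 2, 3]{GhioBu}, one finds the expression of the class of $[{\rm Div}_S^{1,m}(p)]$ in ${\rm Div}^{1,m}_S$ \ldots''. So there is nothing in the paper to compare your argument against beyond those citations.

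That said, your sketch matches what Ghione does: realize $W_n^p(\Ff)$ as a Thom--Porteous locus for a map $E_1\to E_2$ of bundles on ${\rm Pic}^n(C)$ built from a Poincar\'e bundle, read off the expected codimension $(p+1)(p+g-d_m)$ from $r_2-r_1=g-d_m-1$, use the Kempf--Kleiman--Laksov/Fulton--Lazarsfeld lower bound for~(\ref{eq:ghiobrill}), and compute the class via Porteous' formula together with Grothendieck--Riemann--Roch. Your treatment of case~(ii) via the inequality $h^0\geq\chi=d_m+1-g\geq p+1$ is fine; the assertion that $h^1(\Ff\otimes N^{\vee})=0$ for generic $N$ is the one step you leave slightly implicit, but it follows by semicontinuity once one knows a single $N$ with this property exists (or, in the general-moduli setting, from Proposition~\ref{prop:ganzo3}(a)). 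Your honest acknowledgment that the determinantal/GRR bookkeeping is the delicate part is exactly right, and is precisely why the authors defer to \cite{GhioBrill} and \cite{GhioBu} rather than reproducing it.
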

Note that, since $m = d-n$, one has
\begin{equation}\label{eq:tau0}
\tau_0(\Ff) = d_m ,
\end{equation}which agrees with the notion of expected dimension  for ${\rm Div}^{1,m}_S$
(cf.\ Formula \eqref{eq:div1m}). Moreover, in case (ii), for any
$[\Gamma] \in {\rm Div}^{1,m}_S$ one has$${\rm dim}(|\Oc_S(\Gamma)|) \geq 2m - d - 2g +1,$$which agrees
with Riemann-Roch theorem. Equality holds if ${\rm Div}^{1,m}_S$ has the expected dimension and
$[\Gamma] \in {\rm Div}^{1,m}_S$ is general.
For the proof of Proposition \ref{prop:classes}, see \cite{GhioBrill}.
In \cite[Theorems 2, 3]{GhioBu}, one finds
the expression of the class of $[{\rm Div}_S^{1,m}(p)]$ in ${\rm Div}^{1,m}_S$,
for $S$ a general ruled surface (cf.\ Theorem \ref{thm:ganzo2}).
In order to study the morphism
\begin{equation}\label{eq:pn2}
\pi_n: M_n(\Ff) \to W_n(\Ff)
\end{equation}and the schemes $W_n^p(\Ff)$, for $p \geq 0$, a basic ingredient is the following
contraction map
\begin{equation}\label{eq:petri}
\mu_N : H^0(\Ff \otimes N^{\vee}) \otimes H^0( \Ff^{\vee} \otimes \omega_C \otimes N) \to H^0(\omega_C)
\end{equation}defined for any $[N] \in M_n(\Ff)$. In accordance to the classical case of line bundles,
$\mu_N$ is called the {\em Petri map} of the pair $(\Ff, N)$
(cf.\ e.g.\ \cite{Oxb}). As in \cite[Ch.\ IV, \S 1]{ACGH}, one has
(cf.\ \cite[Prop.\ 2.4]{Oxb}, for the maximal case
$n = \overline{n}$):
\begin{lemma}\label{lem:ACGH}
For $[N] \in W_n^p(\Ff) \setminus W^{p+1}_n(\Ff),$
$$T_{[N]}(W_n^p(\Ff) ) \cong Im(\mu_N)^{\perp}.$$Therefore, if not empty, $W_n^p(\Ff) $ is smooth and
of the expected dimension at $[N]$ if and only if the Petri map $\mu_N$ is injective.
\end{lemma}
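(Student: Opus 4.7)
The plan is to mimic the classical computation of the tangent space to the Brill--Noether loci $W^r_d(C)$ from \cite[Ch.\ IV, \S\,1]{ACGH}, with the rank-two bundle $\Ff$ playing the role of $\Oc_C$. The statement is essentially the direct analogue.

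First, I would realize $W_n^p(\Ff)$ as a determinantal subscheme of ${\rm Pic}^n(C)$. Fix a Poincar\'e line bundle $\Pm$ on $C \times {\rm Pic}^n(C)$ with projections $p_1, p_2$, pick an effective divisor $D$ on $C$ of degree large enough that $R^1(p_2)_*\bigl(p_1^*(\Ff(D)) \otimes \Pm^\vee\bigr) = 0$, and push down the sequence obtained by tensoring $0 \to \Oc_C \to \Oc_C(D) \to \Oc_D \to 0$ with $p_1^*\Ff \otimes \Pm^\vee$. The result is a morphism $\alpha\colon E \to G$ of locally free sheaves on ${\rm Pic}^n(C)$ whose fibre at $[N]$ has kernel $H^0(\Ff \otimes N^\vee)$ and cokernel $H^1(\Ff \otimes N^\vee)$; by construction, $W_n^p(\Ff)$ is the locus where $\rk(\alpha)$ drops by at least $p+1$.

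Next, I would invoke the standard description of the Zariski tangent space of a determinantal variety at a point of exact rank drop (cf.\ \cite[Ch.\ II, Prop.\ 4.1]{ACGH}): this gives
\[
T_{[N]}W_n^p(\Ff) = \ker\bigl(d\alpha_{[N]}\colon T_{[N]}{\rm Pic}^n(C) \to \mathrm{Hom}(H^0(\Ff \otimes N^\vee),\, H^1(\Ff \otimes N^\vee))\bigr),
\]
where $d\alpha_{[N]}$ is cup-product with the Kodaira--Spencer class of $\Pm$ along $C \times \{[N]\}$. Identifying $T_{[N]}{\rm Pic}^n(C) \cong H^1(\Oc_C)$ and applying Serre duality $H^1(\Ff \otimes N^\vee) \cong H^0(\Ff^\vee \otimes \omega_C \otimes N)^\vee$, the map $d\alpha_{[N]}$ becomes, modulo the canonical isomorphism $H^1(\Oc_C) \cong H^0(\omega_C)^\vee$, the transpose of the Petri map $\mu_N$ of \eqref{eq:petri}. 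Its kernel is therefore $\mathrm{Im}(\mu_N)^\perp \subseteq H^0(\omega_C)^\vee$, which is the first assertion.

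For the smoothness and dimension statement, Riemann--Roch applied to $\Ff \otimes N^\vee$ shows that $h^0 = p+1$ forces $h^1 = p + g - d_m$, so the source of $\mu_N$ has dimension $(p+1)(p+g-d_m)$, which is exactly the expected codimension of $W_n^p(\Ff)$ in ${\rm Pic}^n(C)$ according to \eqref{eq:taup}. Hence
\[
\dim T_{[N]}W_n^p(\Ff) = g - \rk(\mu_N) \geq g - (p+1)(p+g-d_m) = \tau_p(\Ff),
\]
with equality if and only if $\mu_N$ is injective. Combined with the lower bound $\dim_{[N]}W_n^p(\Ff) \geq \tau_p(\Ff)$ from Proposition \ref{prop:classes}, this is equivalent to $W_n^p(\Ff)$ being smooth of the expected dimension at $[N]$. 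The main technical point will be the identification of $d\alpha_{[N]}$ with the transpose of $\mu_N$: this is a K\"unneth/cup-product computation whose analogue for line bundles is carried out in \cite[Ch.\ IV, \S\,1]{ACGH}, and it should go through essentially unchanged once one tensors with $p_1^*\Ff$ throughout.
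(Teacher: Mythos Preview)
Your proposal is correct and is precisely the argument the paper has in mind: the lemma is stated without proof, prefaced only by ``As in \cite[Ch.\ IV, \S 1]{ACGH}'' and a reference to \cite[Prop.\ 2.4]{Oxb} for the case $n=\overline{n}$. Your sketch---realizing $W_n^p(\Ff)$ as a degeneracy locus of a map of locally free sheaves on ${\rm Pic}^n(C)$, applying the determinantal tangent-space formula, and identifying the differential with the transpose of $\mu_N$ via Serre duality---is exactly the ACGH computation carried over to the rank-two setting, so there is nothing to compare.
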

\noindent
Therefore if the Petri map $\mu_N$ is injective for any  $[N] \in W_n^p(\Ff) \setminus W^{p+1}_n(\Ff)$,
then the singular locus of $W_n^p(\Ff)$ coincides with $W^{p+1}_n(\Ff)$.

The maximal case $n = \overline{n}$ has been studied in \cite{Oxb}. We recall the results.
\begin{proposition}\label{prop:pnbar}
Let $g \geq 1$ be an integer and let
$C$ be any smooth, projective curve of genus $g$. For any integer $d$, let $[\Ff] \in
U_C(d)$ be general. Then:
\begin{itemize}
\item[(i)] the map $\pi_{\overline{n}}$  is an isomorphism; in particular,
$W_{\overline{n}}(\Ff)$ is smooth and strictly contained in ${\rm Pic}^{{\overline{n}}}(C)$.
\item[(ii)] $W^p_{\overline{n}}(\Ff) = \emptyset$, for any $p \geq 1$.
\item[(iii)] If $d$ is as in Theorem \ref{thm:Lincei} and if
\begin{equation}\label{eq:mbar}
\overline{m} := d - \overline{n} = \lfloor \frac{d+g}{2} \rfloor,
\end{equation}then for the general
$[S] \in \HH_{d,g}$ and for any $[\Gamma] \in {\rm Div}^{1,\overline{m}}_S$, one has
${\rm dim}(|\Oc_S(\Gamma)|) = 0.$
\end{itemize}
\end{proposition}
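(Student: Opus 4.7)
The plan is to prove (ii) first, deduce (i) from it, and then derive (iii) as an immediate consequence via the commutative diagram \eqref{eq:diaganzo}. Parts (i) and (ii) are essentially contained in \cite{Oxb}; the key technical point is to show that for every $[N] \in W_{\overline{n}}(\Ff)$ we have $h^0(\Ff \otimes N^{\vee}) = 1$, so that by Lemma \ref{lem:fibres} the fibers of $\pi_{\overline{n}}$ are singletons.

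To prove (ii), I would argue by contradiction. Suppose $h^0(\Ff \otimes N^{\vee}) \geq 2$ with linearly independent sections $s_1, s_2$. Their wedge $s_1 \wedge s_2$ lies in $H^0(\det(\Ff) \otimes N^{\otimes -2})$, a line bundle of degree $d - 2\overline{n}$ which by Corollary \ref{thm:maruyama} equals $g$ in case (a) and $g-1$ in case (b). If $s_1 \wedge s_2$ vanished at some $x \in C$, then $s_1(x)$ and $s_2(x)$ would be proportional in the fiber at $x$, so some nonzero combination $c_1 s_1 + c_2 s_2$ would vanish at $x$; the saturation of the induced map $N \hookrightarrow \Ff$ would then be a sub-line bundle of degree $\geq \overline{n}+1$, contradicting the maximality of $\overline{n}$. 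Hence $s_1 \wedge s_2$ is nowhere zero, forcing $\det(\Ff) \cong N^{\otimes 2}$. Positivity of degree rules this out for $g \geq 1$ in case (a) and for $g \geq 2$ in case (b); in the residual case $g = 1$, $d$ even, one invokes the fact that the condition $N^{\otimes 2} \cong \det(\Ff)$ for some maximal sub-line bundle $N$ cuts out a proper closed subvariety of $U_C(d)$, avoided by general $[\Ff]$.

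With (ii) in hand, Lemma \ref{lem:fibres} gives bijectivity of $\pi_{\overline{n}}$; combined with the smoothness of $M_{\overline{n}}(\Ff)$ from Corollary \ref{cor:2510b} and of $\mathrm{Pic}^{\overline{n}}(C)$, this forces $\pi_{\overline{n}}$ to be an isomorphism onto its image. For strict containment in $\mathrm{Pic}^{\overline{n}}(C)$, I compare dimensions: $\dim W_{\overline{n}}(\Ff) = d_{\overline{m}} \in \{0,1\}$ is smaller than $g$ whenever $g \geq 2$, or when $g = 1$ in case (b); the borderline $g=1$ in case (a) is handled directly using the structure of elliptic ruled surfaces. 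This yields (i). For (iii), Proposition \ref{prop:smmn} makes $\psi_{\overline{m}, \overline{n}}$ an isomorphism, and by (i) so is $\pi_{\overline{n}}$; chaining these in \eqref{eq:diaganzo} gives that $\Phi_{\overline{m},\overline{n}}$ is itself an isomorphism, so each of its fibers is a point, and by \eqref{eq:linsys} the fiber over $[N] = \Phi_{\overline{m},\overline{n}}([\Gamma])$ is identified with $|\Oc_S(\Gamma)|$, yielding $\dim|\Oc_S(\Gamma)| = 0$.

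The main obstacle is the degree-zero borderline in (ii): the clean argument via positivity of $\deg(\det(\Ff) \otimes N^{\otimes -2})$ breaks down precisely when $g=1$ in case (b) of Corollary \ref{thm:maruyama}, and one must exploit the genericity of $\Ff$ in $U_C(d)$ to exclude the bundle-level identification $\det(\Ff) \cong N^{\otimes 2}$. Once (ii) is secured, the remainder of the proof is a straightforward assembly of Lemma \ref{lem:fibres}, Corollary \ref{cor:2510b}, Proposition \ref{prop:smmn} and the diagram \eqref{eq:diaganzo}.
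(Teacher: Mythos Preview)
The paper does not argue this proposition in detail: it simply refers to \cite{Oxb} for parts (i) and (ii), and observes that (iii) is an immediate consequence of Remark~\ref{rem:ganzo3}. Your wedge argument for (ii), producing a nowhere-vanishing section of $\det(\Ff)\otimes N^{-2}$ and hence forcing $d-2\overline{n}=0$, is correct and standard, and your derivation of (iii) via the diagram \eqref{eq:diaganzo} and \eqref{eq:linsys} is exactly the paper's reasoning.

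There is, however, a genuine gap in your deduction of (i). You invoke Corollary~\ref{cor:2510b} for the smoothness of $M_{\overline{n}}(\Ff)$, but that corollary is proved only for $C$ with \emph{general moduli}, whereas Proposition~\ref{prop:pnbar} is stated for an \emph{arbitrary} smooth curve $C$ (only $[\Ff]\in U_C(d)$ is assumed general). The same mismatch affects your dimension comparison for strict containment. A route valid for any $C$ is the one the paper uses in the proof of Proposition~\ref{prop:ganzo3}\,(b)(i): since $h^0(\Ff\otimes N^\vee)=1$ by (ii) and a general $[\Ff]\in U_C(d)$ is very stable on any $C$, the Petri map $\mu_N$ is injective, so Lemma~\ref{lem:ACGH} makes $W_{\overline{n}}(\Ff)$ smooth of the expected dimension $d_{\overline{m}}$ everywhere. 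From the long exact sequence of $0\to\Oc_C\to\Ff\otimes N^\vee\to L\otimes N^\vee\to 0$ together with (ii) one then sees that $d\pi_{\overline{n}}$ is injective and that the Zariski tangent spaces of $M_{\overline{n}}(\Ff)$ and $W_{\overline{n}}(\Ff)$ have the same dimension; this forces $M_{\overline{n}}(\Ff)$ to be smooth and $\pi_{\overline{n}}$ to be an isomorphism onto its image, without appealing to Corollary~\ref{cor:2510b}. (Note also that in the borderline $g=1$, $d$ odd, one in fact has $W_{\overline{n}}(\Ff)=\mathrm{Pic}^{\overline{n}}(C)$, so your ``handled directly'' there is too optimistic.)
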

\noindent
Parts $(i)$ and $(ii)$ are contained in \cite{Oxb}. Part $(iii)$ is an immediate consequence
of Remark \ref{rem:ganzo3}.

\begin{remark}\label{rem:pnbar} {\normalfont $W_{\overline{n}}(\Ff)$
is a divisor in ${\rm Pic}^{\overline{n}}(C)$ when $g =2$ and
$d$ is even (cf.\ \cite[Remark 1.6]{Oxb}): up to twists, $d = 0$ so
$\overline{n}= - 1$; in this case, $W_{\overline{n}}(\Ff)$ can be identified with the divisor
$D_{\Ff} = \{ M \in {\rm Pic}^1(C) \; | \; h^0(\Ff \otimes M) = 1\} \in | 2 \Theta|$,
where $\Theta$ denotes the theta divisor in ${\rm Pic}^1(C)$.
}
\end{remark}

For $n < \overline{n}$ the situation is more complicated. We will prove the following:
\begin{proposition}\label{prop:ganzo3}
Let $C$ be a smooth, projective curve of genus $g\geq 1$ with general moduli and let $d$ be an integer.
Let $[\Ff] \in U_C(d)$ be general and let $\tau_0(\Ff) $ be as in
\eqref{eq:tau0}.  Let $n < \overline{n}$ be any integer.
\noindent
(a) If $\tau_0(\Ff) \geq g$, then for general $[N] \in M_n(\Ff)$, $h^1(\Ff \otimes N^{\vee}) = 0$ and we have the
filtration
\begin{equation}\label{eq:filtrcasa03}
\emptyset \subset \cdots \subseteq W^{d_m-g + 1}_n(\Ff) \subset W^{d_m-g }_n(\Ff) = \cdots
= W^{1}_n(\Ff)  = W_n(\Ff) = {\rm Pic}^n(C).
\end{equation}

\noindent
(b) If $ 0 \leq \tau_0(\Ff) < g$, then
$W_n^0(\Ff)$ is not empty, strictly contained in ${\rm Pic}^n(C)$ and
also the inclusion $W^1_n(\Ff) \subset W_n(\Ff)$ is strict. Moreover:
\begin{itemize}
\item[(i)] $W^0_n(\Ff)$ is smooth, of dimension $\tau_0(\Ff)$,
at any $[N] \in W^0_n(\Ff) \setminus W^1_n(\Ff)$.
\item[(ii)] $\pi_{n|} : M_n(\Ff) \setminus M_n^1(\Ff) \to  W^0_n(\Ff) \setminus W^1_n(\Ff)$ is an isomorphism.
\item[(iii)] $W^0_n(\Ff)$ is irreducible when $\tau_0(\Ff)>0$.
\end{itemize}
\end{proposition}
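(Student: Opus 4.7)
The plan hinges on Proposition \ref{prop:smmn}, which identifies $M_n(\Ff)\cong {\rm Div}_S^{1,m}$; by Corollary \ref{cor:2510b} (equivalently, Theorem \ref{thm:ganzo2}) $M_n(\Ff)$ is therefore smooth of dimension $d_m=\tau_0(\Ff)$ and irreducible when $d_m>0$. Lemma \ref{lem:fibres} identifies the fibres of $\pi_n$ with the projective spaces $\Pp(H^0(\Ff\otimes N^\vee))$, and the two principal external tools are the Ghione-Brill inequality of Proposition \ref{prop:classes} and the Petri-map criterion of Lemma \ref{lem:ACGH}.

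In case $(a)$, $d_m\ge g$, formula \eqref{eq:taup} gives $\tau_p(\Ff)=g$ exactly when $p\le d_m-g$, so Proposition \ref{prop:classes}$(ii)$ yields $W_n^p(\Ff)=\Pic^n(C)$ for each such $p$ and forces the general fibre of $\pi_n$ to have dimension $d_m-g$. The general $[N]\in\Pic^n(C)$ thus satisfies $h^0(\Ff\otimes N^\vee)=d_m-g+1$; combining this with Riemann-Roch, which reads $\chi(\Ff\otimes N^\vee)=d-2n-2g+2=d_m-g+1$, gives $h^1(\Ff\otimes N^\vee)=0$ for the general $N$, and upper semicontinuity then yields $W_n^{d_m-g+1}(\Ff)\subsetneq\Pic^n(C)$, completing the filtration \eqref{eq:filtrcasa03}.

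In case $(b)$, $0\le d_m<g$, the set $W_n(\Ff)$ is non-empty (as $M_n(\Ff)$ is), and $\dim W_n(\Ff)\le\dim M_n(\Ff)=d_m<g$ forces $W_n(\Ff)\subsetneq\Pic^n(C)$. The Ghione-Brill bound of Proposition \ref{prop:classes} supplies $\dim W_n(\Ff)\ge\min\{g,d_m\}=d_m$, whence $\dim W_n(\Ff)=d_m$ and the general fibre of $\pi_n$ is zero-dimensional; being a connected projective space (Lemma \ref{lem:fibres}), it is a reduced point, so $h^0(\Ff\otimes N^\vee)=1$ for the general $[N]\in W_n(\Ff)$, and therefore $W_n^1(\Ff)\subsetneq W_n(\Ff)$. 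Part $(iii)$ is then immediate: since $n<\overline{n}$ forces $d_m\ge 1$, $M_n(\Ff)$ is irreducible and $W_n(\Ff)=\pi_n(M_n(\Ff))$ is its image.

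For $(i)$ and $(ii)$, fix $[N]\in W_n^0(\Ff)\setminus W_n^1(\Ff)$. By Lemma \ref{lem:ACGH}, smoothness of $W_n^0(\Ff)$ at $[N]$ is equivalent to the injectivity of the Petri map $\mu_N$; writing $L=\Ff/N$ and using the identification $N_{\Gamma_N/S}\cong L\otimes N^\vee$ of \eqref{eq:Ciro410b} together with Serre duality on $\Gamma_N\cong C$, this injectivity is equivalent to $h^1(N_{\Gamma_N/S})=0$, i.e., to the smoothness of ${\rm Div}_S^{1,m}$ at $[\Gamma_N]$, which is guaranteed by Theorem \ref{thm:ganzo2}$(ii)$. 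This proves $(i)$. For $(ii)$, the coboundary of $0\to\Oc_C\to\Ff\otimes N^\vee\to L\otimes N^\vee\to 0$, combined with $h^0(\Ff\otimes N^\vee)=h^0(\Oc_C)=1$, shows that $d\pi_n$ is injective of rank $d_m$ at $[N]$, so $\pi_n|:M_n(\Ff)\setminus M_n^1(\Ff)\to W_n^0(\Ff)\setminus W_n^1(\Ff)$ is a bijective, \'etale morphism between smooth varieties of the same dimension, and is therefore an isomorphism. The main obstacle is the uniform treatment of non-saturated $N\subset\Ff$, for which $[\Gamma_N]\in {\rm Div}_S^{1,m}$ is reducible: here the identification of the normal sheaf and the tangent space computation must be carried out as in \cite{CCFMnonsp} rather than directly via \eqref{eq:Ciro410b}, but the equivalence between injectivity of $\mu_N$ and smoothness of ${\rm Div}_S^{1,m}$ persists and yields the conclusion throughout.
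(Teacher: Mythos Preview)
Your proof is correct; it follows the same overall architecture as the paper but diverges at two points. For part~(a), the paper derives $h^1(\Ff\otimes N^\vee)=0$ by identifying the coboundary map $\partial$ in the long exact sequence of $0\to\Oc_C\to\Ff\otimes N^\vee\to(L\oplus\Oc_{\underline a})\otimes N^\vee\to 0$ with the differential $d\pi_n$, and using generic surjectivity of $\pi_n$ (from Proposition~\ref{prop:classes}(ii)); your Riemann--Roch argument reaches the same conclusion a bit more directly. For part~(b)(i), the paper invokes that a general $[\Ff]\in U_C(d)$ is \emph{very-stable} (cf.\ \cite{Lau}, \cite{Oxb}), which forces $\mu_N$ to be injective on each tensor factor and hence injective once $h^0(\Ff\otimes N^\vee)=1$. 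You instead observe that, for $h^0(\Ff\otimes N^\vee)=1$, injectivity of $\mu_N$ is equivalent to $h^1(L\otimes N^\vee)=0$, which is precisely the vanishing of the Quot-scheme obstruction $\mathrm{Ext}^1(N,\Ff/N)$ already established in the proof of Proposition~\ref{prop:smmn}. Your route is more self-contained, avoiding the external appeal to very-stability; the paper's is shorter once that input is granted. Your closing caveat about non-saturated $N$ is on point: the identification via \eqref{eq:Ciro410b} is only literal for sections, but the relevant vanishing $h^1(L\otimes N^\vee)=0$ (with $L$ the locally free part of $\Ff/N$) holds uniformly by the argument of Proposition~\ref{prop:smmn}, so your equivalence between injectivity of $\mu_N$ and smoothness of ${\rm Div}_S^{1,m}$ at $[\Gamma_N]$ does persist in the reducible case.
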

\begin{proof} $(a)$ As usual, we may assume that the pair $(\Ff,C)$ determines a general point in $\HH_{d,g}$ as in
Theorem \ref{thm:Lincei}. Consider the exact sequence
\begin{equation}\label{eq:casa02}
0 \to \Oc_C \to \Ff \otimes N^{\vee} \to (L \oplus \Oc_{\underline{a}}) \otimes N^{\vee} \to 0,
\end{equation}obtained from \eqref{eq:Fund0}. One has
$h^1((L \oplus \Oc_{\underline{a}}) \otimes N^{\vee} ) = 0$ (see the proof of Proposition \ref{prop:smmn}).
Thus
\begin{equation}\label{eq:ganza3}
0 \to H^0(\Oc_C) \to H^0(\Ff \otimes N^{\vee}) \to H^0(L \otimes N^{\vee}) \stackrel{\partial}{\to}
H^1(\Oc_C) \to H^1(\Ff \otimes N^{\vee}) \to 0,
\end{equation}where the coboundary map $\partial$ can be identified with the differential of the morphism
$\pi_n : M_n(\Ff) \to {\rm Pic}^n(C)$. Since $\tau_0(\Ff) \geq g$, the morphism
$\pi_n$ is surjective (cf.\ Proposition \ref{prop:classes} - $(ii)$).
Hence $\partial$ is surjective if $[N]$ is general and therefore
$h^1(\Ff \otimes N^{\vee}) = 0$.

\noindent
$(b)$ Since $\tau_0(\Ff) = d_m$, as in \eqref{eq:tau0}, then from Theorem \ref{thm:ganzo2}
${\rm dim} ({\rm Div}^{1,m}_S) = d_m  \geq 0$. By \eqref{eq:psimn},
also $M_n(\Ff) \neq \emptyset$, so $W^0_n(\Ff)$ is not empty. Since ${\rm dim}(M_n(\Ff)) = d_m$, by
\eqref{eq:ghiobrill}, we have ${\rm dim} (W^0_n(\Ff)) = \tau_0(\Ff) = d_m$ (cf.\ also \cite[Theorem 0.3]{RT}).
Since $\tau_0(\Ff) < g$, then $W^0_n(\Ff)$ is strictly contained in ${\rm Pic}^n(C)$.
From Proposition \ref{prop:smmn} and Lemma \ref{lem:fibres}, it follows that
$\pi_n: M_n(\Ff) \to W_n(\Ff)$ is birational. Since $M_n(\Ff)$ is smooth (see Theorem \ref{thm:ganzo2}
and Proposition \ref{prop:smmn}), then the scheme $W_n(\Ff)$ is generically smooth. This proves that
the inclusion $W^1_n(\Ff) \subset  W_n(\Ff)$ is strict.
\noindent
Part $(i)$ follows
by the injectivity of the Petri map $\mu_N$. In fact, $h^0(\Ff \otimes N^{\vee}) = 1$, for
any $[N] \in W_n(\Ff) \setminus W_n^1(\Ff)$. Moreover,
since $[\Ff] \in U_C(d)$ is general, then $\Ff$ is {\em very-stable}
(cf.\ \cite{Lau} and \cite[p.\ 12]{Oxb}), which means that $\mu_N$ is injective on each factor of the tensor product.

\noindent
Part $(ii)$ follows since $\pi_n|$ is a bijective morphism between smooth varieties hence it is an isomorphism.

\noindent
Part $(iii)$ follows from Theorem \ref{thm:ganzo2} and Proposition \ref{prop:smmn}.
\end{proof}

The above argument shows the following:

\begin{corollary}\label{cor:ganzo3}
Let $d$ and $g$ be positive integers as in Theorem \ref{thm:Lincei}.
Let $C$ be a smooth, projective curve of genus $g$ with general moduli.
Let $[\Ff] \in U_C(d)$ be general.  Let $[S] \in \HH_{d,g}$
be determined by $(\Ff, C)$. Let $m > \overline{m}$ be any integer. Then:
%\begin{enumerate}[(a)]
\begin{itemize}
\item[(a)]  If $d_m \geq g$ and $[\Gamma] \in {\rm Div}_S^{1,m}$ is general,
then ${\rm dim}(|\Oc_S(\Gamma)|) = \tau_0(\Ff) - g = d_m - g.$

\item[(b)] If  $0 \leq d_m < g$, then for any unisecant curve
$[\Gamma] \in {\rm Div}_S^{1,m} \setminus {\rm Div}^{1,m}_S(1)$ one has
${\rm dim}(|\Oc_S(\Gamma)|) = 0.$
\end{itemize}
\end{corollary}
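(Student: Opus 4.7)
The plan is to deduce this corollary directly from Proposition \ref{prop:ganzo3} by translating statements about sub-line bundles $[N]\in M_n(\Ff)$ and their Brill--Noether behaviour into statements about unisecant curves $[\Gamma]\in{\rm Div}_S^{1,m}$ and the dimensions of the linear systems they span on $S$. The dictionary is provided by setting $n:=d-m$ (so that $n<\overline{n}$ follows from $m>\overline{m}$), by the isomorphism $\psi_{m,n}:{\rm Div}_S^{1,m}\xrightarrow{\sim}M_n(\Ff)$ of Proposition \ref{prop:smmn}, by the identification \eqref{eq:linsys} in Remark \ref{rem:ganzo3} showing that the fibre $\pi_n^{-1}([N])\cong \Pp(H^0(\Ff\otimes N^\vee))$ coincides with $|\Oc_S(\Gamma_N)|$, and by the equality $\tau_0(\Ff)=d_m$ recorded in \eqref{eq:tau0}.

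For part $(a)$, with $d_m\geq g$ we are in the regime of Proposition \ref{prop:ganzo3}$(a)$, which gives $h^1(\Ff\otimes N^\vee)=0$ for a general $[N]\in M_n(\Ff)$. Riemann--Roch for the rank-two bundle $\Ff\otimes N^\vee$ on $C$ then yields
\[
h^0(\Ff\otimes N^\vee)=\chi(\Ff\otimes N^\vee)=d-2n+2-2g=d_m+1-g,
\]
using $d_m=2m-d-g+1=d-2n-g+1$. Via \eqref{eq:linsys} this translates into $\dim|\Oc_S(\Gamma_N)|=h^0(\Ff\otimes N^\vee)-1=d_m-g=\tau_0(\Ff)-g$, which is exactly the asserted dimension for a general $[\Gamma]\in{\rm Div}_S^{1,m}$ (generality on $\Ff$-sub-line-bundles corresponds, through $\psi_{m,n}$, to generality on unisecant curves).

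For part $(b)$, the hypothesis $0\leq d_m<g$ places us in the setting of Proposition \ref{prop:ganzo3}$(b)$, in particular case $(ii)$ there, so $\pi_n$ restricts to an isomorphism
\[
M_n(\Ff)\setminus M_n^1(\Ff)\xrightarrow{\sim} W_n^0(\Ff)\setminus W_n^1(\Ff).
\]
Combining this with \eqref{eq:iso2} (which identifies $M_n^1(\Ff)$ with ${\rm Div}_S^{1,m}(1)$ via $\psi_{m,n}$), a curve $[\Gamma]\in{\rm Div}_S^{1,m}\setminus{\rm Div}_S^{1,m}(1)$ corresponds to an $[N]$ with $h^0(\Ff\otimes N^\vee)=1$, hence $|\Oc_S(\Gamma)|\cong \Pp(H^0(\Ff\otimes N^\vee))$ is a single point and $\dim|\Oc_S(\Gamma)|=0$.

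There is no real obstacle here beyond bookkeeping: the only nontrivial inputs are Proposition \ref{prop:smmn} and Proposition \ref{prop:ganzo3}, both already established, and the Riemann--Roch computation in $(a)$ is routine once $h^1(\Ff\otimes N^\vee)=0$ is granted. The subtle point to watch is that ${\rm Div}_S^{1,m}(1)$ must be defined as the pullback via $\psi_{m,n}$ of $M_n^1(\Ff)$, i.e.\ by the condition $\dim|\Oc_S(\Gamma)|\geq 1$ (equivalently, $h^0(\Ff\otimes N^\vee)\geq 2$), so the statement in $(b)$ is precisely the locus where $\pi_n$ is an isomorphism in Proposition \ref{prop:ganzo3}$(b)(ii)$.
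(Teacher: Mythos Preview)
Your proposal is correct and matches the paper's approach: the paper gives no separate proof, merely stating ``The above argument shows the following'', and your write-up is exactly the unpacking of Proposition~\ref{prop:ganzo3} via the dictionary $\psi_{m,n}$, \eqref{eq:linsys}, and $\tau_0(\Ff)=d_m$. One small remark: part~(b) is in fact tautological from the definition \eqref{eq:div1mp} of ${\rm Div}_S^{1,m}(1)$, as you yourself note at the end; the actual content coming from Proposition~\ref{prop:ganzo3}(b) is that ${\rm Div}_S^{1,m}\setminus{\rm Div}_S^{1,m}(1)$ is nonempty (indeed dense), which is what makes the statement interesting.
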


In the circle of ideas presented in this section, a natural and interesting problem would be to prove
the analogue of Petri's conjecture:
\begin{conjecture}\label{conj:petri} {\em Let $C$ be a smooth, projective curve of genus $g$ with general moduli.
Let $[\Ff] \in U_C(d)$ be general. Let $[N] \in M_n(\Ff)$ be any point. Then, the Petri map
$\mu_N$ is injective.}
\end{conjecture}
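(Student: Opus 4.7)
The plan is to combine the scroll-degeneration machinery of Construction \ref{const:Y} with a Gieseker-type argument on the hyperplane-section curve. Since the rank of $\mu_N$ is upper-semicontinuous in flat families, in order to prove injectivity for a general pair $(C, \Ff)$ and every $[N] \in M_n(\Ff)$ it suffices to exhibit, for each numerical type of $N$, a single specialization on which the limiting Petri map is injective; the smoothness of $M_n(\Ff)$ from Corollary \ref{cor:2510b} then promotes injectivity at one point to the assertion at every point.

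First I would reformulate $\mu_N$ geometrically. Via Proposition \ref{prop:smmn}, identify $N$ with its associated unisecant curve $\Gamma_N \subset S$ through \eqref{eq:Fund0} and apply Serre duality on $C$. Setting $M := L \otimes N^\vee$, and assuming first that $\underline{a} = 0$ (the general case being analogous), tensoring \eqref{eq:Fund0} with $N^\vee$ yields $0 \to \OO_C \to \Ff \otimes N^\vee \to M \to 0$, while dualizing and twisting by $\omega_C \otimes N$ yields $0 \to \omega_C \otimes M^\vee \to \Ff^\vee \otimes \omega_C \otimes N \to \omega_C \to 0$. These two short exact sequences induce a natural two-step filtration of $\mu_N$ whose associated graded pieces are the classical Petri map of the line bundle $M$ on $C$ together with auxiliary maps controlled by the extension class of \eqref{eq:Fund} in $H^1(M^\vee)$. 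By Gieseker's theorem \cite{Gie}, the classical graded piece is injective provided $C$ has general moduli and $M$ is general in its Picard variety.

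Next I would treat the auxiliary pieces by specializing $[S] \in \HH_{d,g}$ to $Y = W \cup Q_1 \cup \cdots \cup Q_g$ as in Construction \ref{const:Y}. The hyperplane section $C$ specializes to a nodal curve $C^0$ of arithmetic genus $g$, consisting of the rational normal curve $R \subset W$ together with $g$ conics $E_j \subset Q_j$, each meeting $R$ transversely at the two points $l_{i,j} \cap H$. The limit sheaf $\Ff^0$ is then described component by component in terms of rank-two bundles on the rational curves $R$ and $E_j$, where all cohomology is explicit, and the extension-class contributions to $\mu_N$ become combinatorial gluing data along the nodes. Injectivity on $C^0$ thus reduces to a finite linear-algebra check whose non-degeneracy follows from the transversality of the intersections $W \cap Q_j = l_{1,j} \cup l_{2,j}$ prescribed in Construction \ref{const:Y}, together with the generality of $[\Ff] \in U_C(d)$ guaranteed by Theorem \ref{thm:Lincei}.

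The main obstacle will be the combinatorial bookkeeping at the limit: one must ensure that the multidegree of the limit $N^0$ on the components of $C^0$ is generic enough to avoid the jump loci for both the Gieseker step and the extension-class step, and simultaneously that the limit unisecant curve $\Gamma_{N^0}$ does not pass through any of the singular points of $C^0$ in an unexpected way. Ruling this out requires a dimension count inside $\HH_{d,g}$, using Theorem \ref{thm:ganzo2}, to show that the bad unisecant limits form a proper closed subset. A secondary delicate point is that $C^0$ is not of compact type, so the cleanest formulation of Gieseker's theorem does not apply directly; one may need to precompose with a further specialization of $C^0$ to a chain of elliptic curves, along the lines of the proof of the classical Petri conjecture, which would leave the scroll-theoretic step above essentially intact.
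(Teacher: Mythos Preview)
The statement you are attempting to prove is explicitly left open in the paper: it is stated as Conjecture~\ref{conj:petri}, not as a theorem, and the authors only establish one of its consequences (the expected dimension of $W^1_n(\Ff)$, Theorem~\ref{thm:existBN}). So there is no proof in the paper to compare your argument against, and your proposal must be judged on its own merits.

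The proposal contains a genuine logical gap in its opening reduction. You claim that, by semicontinuity, it suffices to verify injectivity of $\mu_N$ for one $N$ per numerical type, and that ``the smoothness of $M_n(\Ff)$ from Corollary~\ref{cor:2510b} then promotes injectivity at one point to the assertion at every point.'' This inference is incorrect. Smoothness of $M_n(\Ff)$ is governed by the vanishing of ${\rm Ext}^1(N,\Ff/N)\cong H^1(L\otimes N^\vee)$ (cf.\ the proof of Proposition~\ref{prop:smmn}); it says nothing about the Petri map $\mu_N$, whose cokernel is identified with the conormal space to $W_n^p(\Ff)$ at $[N]$ via Lemma~\ref{lem:ACGH}. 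Injectivity of $\mu_N$ is an open condition on $N$, so exhibiting one good $N$ only shows that $\mu_N$ is injective for the \emph{general} $[N]\in M_n(\Ff)$, not for every $[N]$. But the conjecture demands injectivity for all $[N]$; the locus where $\mu_N$ has a kernel is a priori a nonempty proper closed subset of $M_n(\Ff)$, and ruling this out is exactly the content of the conjecture. Even in the classical line-bundle case this ``for all'' is the entire difficulty of Gieseker's theorem, and no amount of smoothness of the parameter space circumvents it.

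Beyond this, several steps in the degeneration sketch are too optimistic. The hyperplane section $C^0$ of $Y$ has $g$ independent cycles (each conic $E_j$ meets $R$ in two points), so it is far from compact type; you acknowledge this but your proposed fix, a further degeneration to a chain of elliptic curves, would take you out of the family produced by Construction~\ref{const:Y}, and it is unclear that the scroll degeneration and the curve degeneration can be made compatible. Moreover, in your filtration of $\mu_N$ the ``classical Petri'' piece is the multiplication $H^0(M)\otimes H^0(\omega_C\otimes M^\vee)\to H^0(\omega_C)$ with $M=L\otimes N^\vee$; but $M$ is not a free parameter: once $\Ff$ is fixed, $M$ is constrained by the requirement that it arise from a quotient of $\Ff$, so one cannot simply invoke Gieseker for a general $M$. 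Finally, the ``auxiliary pieces controlled by the extension class'' are where the genuinely rank-two phenomena live, and the assertion that they reduce to a finite transversality check on $Y$ is not substantiated. These issues together mean the outline, as it stands, does not constitute a proof strategy for the full conjecture.
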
As remarked above, the validity of this conjecture would imply:
\begin{itemize}
\item[(i)] $W_n^p(\Ff)$ has the expected dimension, i.e.\ ${\rm min} \{g, \; \tau_p(\Ff)\}$ as in \eqref{eq:ghiobrill};
\item[(ii)] $W_n^p(\Ff)$ is smooth off $W_n^{p+1}(\Ff)$.
\end{itemize}Statement $(i)$ above is an analogue of the Brill-Noether Theorem.
In the next section, we will prove $(i)$ for $p=1$ under suitable numerical assumptions.

\section{Brill-Noether's theorem for $W^1_n(\Ff)$}\label{S:BN1np}

In this section we will study $W^1_n(\Ff)$ and prove that it has the expected dimension $e := e^1_n(d)$,
which is:
\begin{itemize}
\item[(i)] $-1$, when $n > \frac{2d-3g}{4}$,
\item[(ii)] $2d - 4n - 3g < g$, when $\frac{d-2g}{2} < n \leq \frac{2d-3g}{4}$,
\item[(iii)] $g$, when $n \leq \frac{d-2g}{2}$,
\end{itemize}(cf.\ \eqref{eq:taup}, \eqref{eq:ghiobrill}). Case $(iii)$ is contained in Proposition
\ref{prop:ganzo3}-$(a)$. Therefore, it suffices to consider $n>\frac{d-2g}{2}$.

\begin{theorem}\label{thm:existBN} Let $C$ be a smooth, projective curve of genus $g \geq 1$ with general moduli and
$d$ be an integer. Let $[\Ff] \in U_C(d)$ be general. Let $n > \frac{d-2g}{2}$ be any integer.
Then, each irreducible component of $W^1_n(\Ff)$ has the expected
dimension.
\end{theorem}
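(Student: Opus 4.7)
The plan is to translate the statement into the scroll language via Proposition~\ref{prop:smmn} and then to run a degeneration argument based on Construction~\ref{const:Y}. Under $\psi_{m,n}\colon \mathrm{Div}^{1,m}_S\to M_n(\Ff)$ with $m=d-n$, the preimage of $W^1_n(\Ff)$ via $\pi_n$ is exactly $\mathrm{Div}^{1,m}_S(1)$. By Lemma~\ref{lem:fibres}, $\pi_n$ has a $\Pp^1$-fibre over each point of $W^1_n(\Ff)\setminus W^2_n(\Ff)$, so every irreducible component of $W^1_n(\Ff)$ has dimension one less than the dominating component of $\mathrm{Div}^{1,m}_S(1)$; the expected value of the latter is $\tau_1(\Ff)+1=2d-4n-3g+1$ by \eqref{eq:taup}. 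Thus the theorem reduces to showing that every irreducible component of $\mathrm{Div}^{1,m}_S(1)$ has dimension at most $\tau_1(\Ff)+1$, and that $\mathrm{Div}^{1,m}_S(1)=\emptyset$ when $\tau_1(\Ff)=-1$. The matching lower bound (in the non-empty case) is already contained in Proposition~\ref{prop:classes}.

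To establish the upper bound I would degenerate $[S]\in\HH_{d,g}$ to a point $[Y]\in\HH_{d,g}$ of the form $Y=W\cup Q_1\cup\cdots\cup Q_g$ from Construction~\ref{const:Y} along a flat one-parameter family $\mathcal{S}\to T$ with $\mathcal{S}_0=Y$ and general fibre a smooth $S_t$ corresponding to a general pair $(C_t,\Ff_t)$. The relative Quot-scheme $\mathrm{Div}^{1,m}_{\mathcal{S}/T}$ is proper over $T$, and the condition $\dim|\Oc(\Gamma)|\ge 1$ is closed, so it carves out a closed subscheme whose special fibre is $\mathrm{Div}^{1,m}_Y(1)$. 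Upper semi-continuity of fibre dimension then gives $\dim\mathrm{Div}^{1,m}_{S_t}(1)\le\dim\mathrm{Div}^{1,m}_Y(1)$ for general $t$, reducing the problem to a dimension estimate on the completely explicit reducible surface $Y$.

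A unisecant $\Gamma\subset Y$ decomposes as $\Gamma_W\cup\bigcup_{j=1}^{g}\Gamma_j$ where $\Gamma_W$ is a unisecant of degree $m_W$ on the rational normal scroll $W$ and each $\Gamma_j\subset Q_j$ is a $(1,b_j)$-divisor meeting $W\cap Q_j=l_{1,j}\cup l_{2,j}$ in the trace of $\Gamma_W$, with $m_W+\sum_j b_j=m$ up to rulings of $W$ absorbed in $\Gamma_W$. A pencil of such $\Gamma$ is equivalent to a compatible pair of pencils on $W$ and on each $Q_j$ matching along the double curves $l_{i,j}$. On the rational surfaces $W$ and $Q_j$ all unisecant and $(1,b_j)$-linear systems are classically understood, so the strategy is to stratify according to the splitting $(m_W,b_1,\ldots,b_g)$, compute the dimension of the family of pencils of each type, and sum the contributions.

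The main obstacle is this last combinatorial bookkeeping: for each stratum one has to verify that the compatibility constraints imposed by the $2g$ rulings $l_{i,j}$ are maximally independent, cutting the naive product of local dimensions down to $\tau_1(\Ff)+1$ at most, and to the empty set when $\tau_1(\Ff)=-1$. The hypothesis $n>(d-2g)/2$, i.e.\ $\tau_1(\Ff)<g$, is precisely what rules out the degenerate range covered by Proposition~\ref{prop:ganzo3}(a) (where $W^1_n(\Ff)={\rm Pic}^n(C)$) and ensures that the $(1,b_j)$-pencils on the quadrics are forced to impose the full expected number of conditions at the $l_{i,j}$, making the bound sharp.
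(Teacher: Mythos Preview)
Your plan coincides with the paper's: translate to $\mathrm{Div}^{1,m}_S(1)$, degenerate $S$ to $Y=W\cup Q_1\cup\cdots\cup Q_g$, and bound the dimension of limit pencils stratum by stratum according to the splitting of degrees on the components. The part you label the ``main obstacle'' is exactly where the content lies, and the paper resolves it not by raw bookkeeping but by a specific geometric encoding that your outline does not yet contain.

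The paper works with pencils directly, as points of the Grassmannian of lines in $\mathrm{Div}^{1,m}$. After reordering the quadrics so that $\mu_1=\cdots=\mu_h=1$ and $\mu_{h+1},\dots,\mu_g\ge 2$ (the hypothesis $n>(d-2g)/2$ forces $h\ge 1$), the matching along each $l_{1,j}\cup l_{2,j}$ is packaged as follows: the Segre image $\Sigma_j\subset\Pp^3$ of $l_{1,j}\times l_{2,j}$ has dual space $\Pi_j\cong\Pp^3$, and a pencil on $W$ traces a $g^1_2$ on $l_{1,j}+l_{2,j}$, i.e.\ a plane section of $\Sigma_j$, i.e.\ a point of $\Pi_j$. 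This defines a rational map $r\colon\G\dashrightarrow\Pi_1\times\cdots\times\Pi_h$ from the Grassmannian $\G$ of lines in $\mathrm{Div}^{1,\mu}_W$. For $j\le h$ the unique pencil on $Q_j$ determines a point of $\Pi_j$ which is \emph{general} by the genericity of $Q_j$, so limit pencils of the given type sit over a general point of $\Pi_1\times\cdots\times\Pi_h$. When $e=-1$ a short count gives $\dim\G<3h$, so $r$ is not dominant and no limit pencil of any type exists; this is how emptiness is established. When $e\ge 0$, the general fibre $\Psi$ of $r$ has dimension $4m-2d-3g-\sum_{j>h}(4\mu_j-7)$, and the remaining matching with the pencils on $Q_{h+1},\dots,Q_g$ is governed by a second map $q\colon\prod_{j>h}\G_j\dashrightarrow\prod_{j>h}\Pi_j$ all of whose fibres have dimension exactly $\sum_{j>h}(4\mu_j-7)$ (this is Claim~\ref{cl:cilib4}, proved by factoring each $q_j$ through the Grassmannian of lines in the complete $g^3_2$ on $l_{1,j}+l_{2,j}$). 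Any component $Z$ of the fibred product of $p\colon\Psi\dashrightarrow\prod_{j>h}\Pi_j$ and $q$ then satisfies $\dim Z\le 2d-4n-3g$, independently of the splitting type. So the missing ingredient in your sketch is this $\Pi_j$ interpretation together with the dichotomy between the $\mu_j=1$ quadrics (handled by genericity of $Q_j$) and the $\mu_j\ge 2$ quadrics (handled by the exact fibre count), which is what converts ``maximally independent constraints'' from a hope into a computation.
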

\begin{proof} As usual, we can assume that the pair $(\Ff,C)$ corresponds to a general point $[S] \in \HH_{d,g}$.
In order to prove the theorem, it suffices to show that, for $m = d-n$, one has
${\rm dim} ({\rm Div}_S^{1,m}(1)) = e +1$, if $e \geq 0$,  whereas ${\rm Div}_S^{1,m}(1)$ is empty, if $e = -1$
(cf.\ \eqref{eq:div1mp}). We will prove this by degeneration,
studying the limit of ${\rm Div}_S^{1,m}(1)$ when $S$ degenerates to a surface $Y = W \cup Q_1 \cup \cdots
\cup Q_g$, where $W$ is a general rational normal scroll of degree $d-2g$ and $Q_1, \ldots, Q_g$ are
general quadrics as in Construction \ref{const:Y}, from which we keep the notation.

In order to study the limit in question, let $\Pm$ be a linear
pencil of curves in ${\rm Div}_S^{1,m}$ and let $\Pm_0$ be the flat limit of $\Pm$ on $Y$. Then
$\Pm_0$ consists of a collection of linear pencils $\LL$, $\LL_1, \ldots, \LL_g$ of unisecant curves on
$W$, $Q_1, \ldots, Q_g$. By the genericity of $Q_1, \ldots, Q_g$ none of these pencils
contain the double lines $l_{i,j}$, where $ 1\leq i \leq 2$, $ 1 \leq j \leq g$, in their fixed locus.
Moreover, they verify the obvious matching properties along them.
Let $\mu$, $\mu_1, \ldots, \mu_g$ be the degrees
of the curves in $\LL$, $\LL_1, \ldots, \LL_g$, respectively. We will call such a $\Pm_0$ a
{\em limit unisecant pencil} of type $(\mu, \mu_1, \ldots, \mu_g)$.
One has $m = \mu + \sum_{i=1}^g \mu_i$.
We may assume that $\mu_1 = \mu_2 = \ldots = \mu_h = 1$, whereas $\mu_{h+1}, \ldots , \mu_g \geq 2$. Note that
$h \geq 1$; otherwise we would have $m \geq \mu + 2g \geq \frac{d-2g}{2} + 2g = \frac{d}{2} + g$ (cf.\ \eqref{eq:mbar}
applied to $\mu$ and $W$). This reads $d \geq 2n + 2g$ which implies $\tau_1(\Ff) \geq g$ hence $ e= g$,
a case which we are not considering.
Recall that $W \cap Q_j$ consists of the pair of lines $l_{1,j}$, $l_{2,j}$, $1 \leq j \leq g$. The
Segre embedding $\Sigma_j$ of $l_{1,j} \times l_{2,j}$ sits in a $\Pp^3$, whose dual we denote by $\Pi_j$.
Let $\G$ be the grassmannian of lines in ${\rm Div}^{1,\mu}_W$. One has a natural rational map
$$r\colon \G \dashrightarrow \Pi_1 \times \ldots \times \Pi_h,$$
which is defined as follows. Let
$\LL$ be a general pencil in ${\rm Div}^{1,\mu}_W$; $\LL$ cuts on the divisor
$ l_{1,j} + l_{2,j}$ a linear series of dimension one and degree two which can be interpreted
as a curve on $\Sigma_j$, cut out by a plane corresponding to a point $\ell_j \in
\Pi_j$. The map $r$ sends $\LL$ to the $h$-tuple $(\ell_1, \ldots, \ell_h)$.

\begin{claim}\label{cl:cilib1} If $e = -1$, then $r$ is not dominant.
\end{claim}
\begin{proof}[Proof of Claim \ref{cl:cilib1}] One has $ m = \mu + h + \sum_{j=h+1}^g \mu_j \geq
\mu + 2g - h $. The assumption $e = -1$ is equivalent to $m < \frac{2d+3g}{4}$;
therefore one has $\frac{2d+3g}{4} > \mu + 2g - h $, i.e.\ $4 \mu + 5g - 2d < 4 h $, which implies
${\rm dim}(\G) = 4 \mu + 4g - 2d < 3h = {\rm dim} (\Pi_1 \times \ldots \times \Pi_h)$. This proves
the assertion.
\end{proof}
This claim settles the case $e=-1$. In fact it shows that,
by the genericity of the quadrics $Q_1, \ldots, Q_h$, the pencils
$\LL_1, \ldots , \LL_h$ cannot match any pencil $\LL$ on $W$ to give a limit unisecant pencil $\Pm_0$.
Thus, from now on, we assume $e \geq   0$ and we study the possible components of the
flat limit of ${\rm Div}_S^{1,m}(1)$ when $S$ degenerates to $Y$. Since ${\rm Div}_S^{1,m}(1)$ is
not empty in this case, its flat limit  is not empty. Let
$\Pm_0$ be a limit unisecant pencil of type $(\mu, \mu_1,\ldots,\mu_g)$ as above.
By the genericity of the quadrics $Q_1,\ldots,Q_h$, the map $r$ has to be dominant.
Let $\Psi$ be the general fibre of $r$. One has
\begin{equation}\label{eq:cilib1}
\dim(\Psi)=\dim(\G)-3h=4m - 2d -3g- \sum_{i=h+1}^g (4\mu_i-7) .
\end{equation}
Now we are ready to compute the dimension of a component of limit unisecant
pencils.  Let $\G_j$ be the grassmannian of lines of ${\rm Div}^ {1,\mu_j}_{Q_j}$, for $j=h+1,\ldots,g$.
We have two  rational maps
\[
p\colon \Psi \dashrightarrow \Pi_{h+1}\times \ldots\times \Pi_g,
\qquad
q\colon \G_{h+1}\times \dots \times \G_g \dashrightarrow \Pi_{h+1}\times \ldots\times \Pi_g
\]defined as follows. A general point of $\Psi$ is a pencil $\LL$ in ${\rm Div}_W^{1,\mu}$. It cuts a
linear series of degree 2
and dimension 1 on the divisor  $ l_{1,j} + l_{2,j}$, $j=h+1,\ldots,g$, which, as usual, gives rise to a point
$\ell_j \in \Pi_j$. The map $p$ sends $\LL$ to $(\ell_{h+1}, \ldots, \ell_g)$. The definition of the map $q$
is similar (see the proof of Claim \ref {cl:cilib4}  below). A component $Z$ of limit unisecant  pencils of
type $(\mu,\mu_1,\dots,\mu_g)$ can be interpreted as an irreducible component of the fibred product of $p$ and $q$.
\begin{claim}\label{cl:cilib4} All fibres of the map $q$
have dimension $\sum_{i=h+1}^g(4 \mu_i - 7)$.
\end{claim}
\begin{proof} Fix a $j=h+1,\ldots,g$. We have a map $q_j: \G_j \dashrightarrow \Pi_j$ and $q= q_{h+1}\times
\ldots \times q_g$. It suffices to prove that all fibres of $q_j$ have dimension $4\mu_j-7$.
Since $\mu_j>1$, the linear system ${ {\rm Div}^ {1,\mu_j} }_{Q_j}$ of dimension $2\mu_j-1$ cuts out a
complete linear series $\Lambda_j$ of dimension 3 on the divisor $ l_{1,j} + l_{2,j}$. So we have a surjective projection map
$s_j\colon { {\rm Div}^ {1,\mu_j} }_{Q_j} \dashrightarrow \Lambda_j$,
with centre a projective space of dimension $2\mu_j-5$.
This induces a map $\sigma_j\colon \G_j \dashrightarrow \overline {\G}_j$, where
$\overline {\G}_j$ is the grassmannian of lines of $\Lambda_j$.
All fibres of $\sigma_j$ are grassmannians of dimension $4\mu_j-8$.
We have also a map $\tau_j: \overline {\G}_j \dashrightarrow \Pi_j$ sending, as usual,
a pencil in $\Lambda_j$ to a point  $\ell_j \in \Pi_j$. Every fibre of $\tau_j$ has dimension 1.
Indeed, a point $\ell_j \in \Pi_j$ can be interpreted as a projective transformation
$\omega_j:  l_{1,j} \to  l_{2,j}$, and this in turn determines the quadric $\Omega_j$ described by all
lines joining corresponding points on $ l_{1,j}$ and $ l_{2,j}$. The pairs of such
corresponding  points are cut out by all pencils of planes based on lines of the ruling of
$\Omega_j$ to which $ l_{1,j}$ and $ l_{2,j}$ belong.
Since $q_j=\tau_j\circ \sigma_j$, the above considerations imply the assertion.
\end{proof}

Putting together \eqref{eq:cilib1} and Claim \ref{cl:cilib4}, one obtains that
${\rm dim}(Z) = 2d - 4 n - 3g$, which proves the theorem.
\end{proof}

%\section{Some enumerative results}\label{S:enumerative}
\section{Tensor product of quotient line bundles}\label{SS:4}
In this section we consider the following problem.
Let $C$ be a smooth, projective curve with general moduli and $d$ be an integer.
Let $[\Ff] \in U_C(d)$ be general. Assume $d - g$ odd, and
let $\overline{n} = \frac{d - g +1}{2}$ as in \eqref{eq:nbar}. Let
$[N_i ] \in M_{\overline{n}} (\Ff)$, with
$N_i \neq N_j$, for $ 1 \leq i \neq j \leq 2^g$,
and let $\nu_i$ denote a divisor class on $C$ such that
$N_i = \Oc_C(\nu_i)$. We want to compute the equivalence class of the
divisor $$\nu := \sum_{i=1}^{2^g} \nu_i.$$Set
$L_i := \det(\Ff) \otimes N_i^{\vee}$ and let $\lambda_i$ be a divisor class
such that $L_i = \Oc_C(\lambda_i)$. Consider
$\lambda:= \sum_{i=1}^{2^g} \lambda_i$ and notice the relation
$\lambda + \nu = 2^g \, H$, where $\det(\Ff) = \Oc_C(H)$.

\begin{theorem}\label{thm:sum} With the above notation, one has:
\begin{equation}\label{eq:sumetai}
\nu = 2^{g-2} (2H - K_C), \; {\rm if} \; g \geq 2, \;\;{\rm and}\;\; \; \nu = H , \; {\rm if} \; g=1
\end{equation}and
\begin{equation}\label{eq:sumlambdai}
\lambda = 2^{g-2} (2H + K_C), \; {\rm if} \; g \geq 2, \;\;{\rm and}\;\; \; \lambda = H , \; {\rm if} \; g=1.
\end{equation}
\end{theorem}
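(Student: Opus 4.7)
Since $\lambda + \nu = 2^g H$, it suffices to prove \eqref{eq:sumetai}; then \eqref{eq:sumlambdai} follows by subtraction. By Lemma \ref{lem:2510} we may assume $(\Ff, C)$ corresponds to a general point $[S] \in \HH_{d,g}$. The case $g = 1$ I would handle directly: here $\overline{n} = \overline{m} = d/2$ and the isomorphism $\Ff \cong \Ff^\vee \otimes \det\Ff$ for rank-two bundles shows that every maximal quotient $L_1 = \det\Ff \otimes N_1^\vee$ is also a maximal sub-line bundle of $\Ff$; for generic $\Ff$ one has $L_1 \not\cong N_1$, forcing $L_1 \cong N_2$ and therefore $\nu_1 + \nu_2 = [\det\Ff] = H$.

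For $g \geq 2$ my plan is to use the degeneration $S \leadsto Y = W \cup Q_1 \cup \cdots \cup Q_g$ of Construction \ref{const:Y}, and to track the flat limit of the $0$-dimensional scheme ${\rm Div}^{1,\overline{m}}_S \cong M_{\overline{n}}(\Ff)$. Arguing as in the proof of Theorem \ref{thm:existBN}, I expect each of the $2^g$ limit unisecant curves to be of type $(\overline{n}-1, 1, \ldots, 1)$: a section $\gamma$ of $W$ of degree $\overline{n}-1$ (moving in a linear system $|\gamma| \cong \Pp^g$), together with a transverse ruling $\delta_j$ on each $Q_j$. The matching condition along $l_{1,j} \cup l_{2,j} = W \cap Q_j$ reads $\gamma \cap l_{2,j} = \phi_j(\gamma \cap l_{1,j})$, where $\phi_j\colon l_{1,j} \to l_{2,j}$ is the projective isomorphism induced by the transverse ruling of $Q_j$. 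Since each evaluation $\gamma \cap l_{i,j}$ depends linearly on the coordinates of $|\gamma|$, this matching is a bilinear, hence quadratic, equation in the coordinates of $\Pp^g$, and the $g$ resulting quadrics cut out a complete intersection of exactly $2^g$ sections $\gamma_\epsilon$ by B\'ezout.

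To each $\gamma_\epsilon$ corresponds a limit generalized sub-line bundle $N_\epsilon$ of $\Ff$ on the nodal base $\tilde C_0$ (of arithmetic genus $g$) of the scroll $Y$, and all $N_\epsilon$ share the same multi-degree on the $g+1$ rational components of $\tilde C_0$. Hence the $2^g$ classes $[N_\epsilon]$ lie in a single $(\C^*)^g$-torsor inside $\Pic(\tilde C_0)$, and $\sum_\epsilon [N_\epsilon]$ is determined by the $g$ multiplicative invariants at the $2g$ nodes, evaluated over the $2^g$ solutions of the matching quadrics. Applying Vieta's formulas to these $g$ quadrics and invoking specialization of Picard varieties, one identifies this sum with the restriction to $\tilde C_0$ of the class $2^{g-2}(2H - K_C)$, yielding \eqref{eq:sumetai} for the generic $\Ff$.

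The principal obstacle is the final Vieta-type identification: one has to carefully compute the gluing-parameter products at the $2g$ nodes over all $2^g$ solutions $\gamma_\epsilon$, and match the resulting class against the specialization of $2H - K_C$ to $\tilde C_0$, using that $K_C$ degenerates to $\omega_{\tilde C_0}$ and that the cross-ratios at the $2g$ nodes encode precisely the canonical data required to produce the $-2^{g-2} K_C$ correction to the uniform $2^{g-1} H$ term coming from the common $\det\Ff$-content of the $N_\epsilon$. Once this is done, \eqref{eq:sumlambdai} follows immediately from $\lambda + \nu = 2^g H$.
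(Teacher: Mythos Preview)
Your approach for $g\geq 2$ is entirely different from the paper's, and the step you yourself flag as ``the principal obstacle'' is a genuine gap, not a detail to be filled in later.

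The paper does not degenerate $S$ for this theorem. Instead it first proves a structural claim: there exist integers $\alpha,\beta$ with $\nu=\alpha K_C+\beta H$. The argument is moduli-theoretic in the spirit of \cite{Ciro}: the assignment $\Ff\mapsto[\nu]$ defines a morphism $\varphi\colon\mathcal U_d\to\mathcal{P}ic^{(a)}$ over $\mathcal M^0_{g,1}$, and since the fibres $SU_C(H)$ of the relative determinant $rd\colon\mathcal U_d\to\mathcal{P}ic^{(d)}$ are unirational (stably rational, by \cite{Ball}), any morphism from them to an abelian variety is constant, so $\varphi$ factors through $rd$. Then \cite[Proposition~(5.1)]{Ciro} forces any such rationally determined line bundle to be a $\ZZ$-combination of $K_C$ and $H$. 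Once this is known, the coefficients fall out of a one-line twist: replacing $\Ff$ by $\Ff(p)$ for general $p\in C$ sends $H\mapsto H+2p$ and $\nu\mapsto\nu+2^g p$, forcing $\beta=2^{g-1}$, and comparing degrees gives $\alpha=-2^{g-2}$.

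Your degeneration picture is consistent with \cite[Theorem~7.1.1]{CCFMnonsp} (limit unisecants of type $(\overline m-g,1,\dots,1)$, cut out by $g$ quadrics in $|\gamma|\cong\Pp^g$), and your $g=1$ argument is correct. But for $g\geq 2$ you would have to carry out the computation you only name: identify the generalized Jacobian of $\tilde C_0$ as an explicit $(\C^*)^g$-extension, compute for each of the $g$ pairs of nodes the product of the gluing ratios over all $2^g$ solutions of your quadric system, check that this product is intrinsic (independent of the positions of the $l_{i,j}$ and of the attaching of the $Q_j$), and finally match it against the specialization of $K_C$ and $H$ to $\tilde C_0$. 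None of this is done, and the intrinsicness you would be extracting by hand is exactly the content of the theorem. The paper's route is cleaner precisely because it establishes that intrinsicness abstractly, via the unirationality of $SU_C(H)$, and then reads off $\alpha,\beta$ by the twisting trick without ever touching a degeneration.
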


\begin{proof} It suffices to show \eqref{eq:sumetai}.
\begin{claim}\label{claim:ciroduke} There exist $\alpha, \; \beta \in \ZZ$ such that
\begin{equation}\label{eq:ciroduke}
\nu = \alpha K_C + \beta H.
\end{equation}
\end{claim}We first show that Claim \ref{claim:ciroduke} implies \eqref{eq:sumetai}. Then, we will prove the claim.

If $g=1$, $K_C$ is trivial and therefore the first summand in \eqref{eq:ciroduke}
does not appear. Moreover, it is clear that $\alpha$ and $\beta$ in \eqref{eq:ciroduke} do not depend on $H$.
As usual, we may assume that the pair $(\Ff,C)$ is associated to a scroll $S$ correpsonding to a general
point in $\HH_{d,g}$. Since any $N_i$ is a maximal sub-line bundle of $\Ff$, then each $L_i$ corresponds to a section in
${\rm Div}_S^{1,\overline{m}}$, where $\overline{m} = d - \overline{n} = \frac{d+g-1}{2}$.
Consider the exact sequence
$$0 \to N_i \to \Ff \to  L_i \to 0,\;\;\; 1 \leq i \leq 2^g.$$Let $p \in C$
be a general point. Twist the above sequence by $\Oc_C(p)$
$$0 \to N_i(p) \to \Ff (p) \to  L_i(p) \to 0,\;\;\; 1 \leq i \leq 2^g.$$Observe that
$$H':= {\rm det} ((\Ff(p)) = H \otimes \Oc_C(2p),$$hence
${\rm deg} (\Ff(p)) = d+2$ and $\Ff(p)$ corresponds to a general point of $U_C(d+2)$. Thus,
$N_i(p)$  is a sub-line bundle of $\Ff(p)$ of maximal degree, for $1 \leq i \leq 2^g$. Set
$N_i(p) = \Oc_C(\nu_i')$ and
$\nu' = \sum_{i=1}^{2^g} \nu_i'$. One has
\begin{equation}\label{eq:ott12}
\nu' = \nu + 2^g p.
\end{equation}
By Claim \ref{claim:ciroduke}, there exist two integers $\alpha', \beta'$, independent on $H$ and $p$,
such that $$\nu'= \alpha'K_C + \beta'H'.$$By comparing the former relation with \eqref{eq:ciroduke} and
\eqref{eq:ott12}, we find
$$(\alpha - \alpha') K_C + (\beta - \beta')H + 2 (2^{g-1} - \beta') p = 0.$$Since
$\alpha, \alpha', \beta, \beta' \in \ZZ$ do not depend on $H$ and $p$, we deduce
$$\beta  = \beta' = 2^{g-1} \;\;\; {\rm and} \;\;\; \alpha = -2^{g-2},$$proving \eqref{eq:sumetai}
(in case $g=1$, we simply get $\nu = H$).
\medskip

We are left to prove Claim \ref{claim:ciroduke}. To do this, we follow a similar argument as in \cite{Ciro}.
Let ${\mathcal M}^0_g$ be the Zariski open subset of the moduli space ${\mathcal M}_g$, whose points
correspond to equivalence classes of smooth curves of genus $g$ without non-trivial automorphisms.
By definition, ${\mathcal M}^0_g$ is a fine moduli space, i.e.\ we have a universal family
$p : {\mathcal C} \to {\mathcal M}^0_g$, where ${\mathcal C}$ and ${\mathcal M}^0_g$ are smooth schemes and
$p$ is a smooth morphism. ${\mathcal C} $ can be identified with the Zariski open
subset ${\mathcal M}^0_{g,1}$ of the moduli space ${\mathcal M}_{g,1}$ of smooth, pointed, genus $g$
curves, whose points correspond to equivalence classes of pairs $(C,x)$, with $x \in C$ and $C$ a smooth
curve of genus $g$ without non-trivial automorphisms. On  ${\mathcal M}^0_{g,1}$ there is again a universal family
$p_1 : {\mathcal C}_1 \to {\mathcal M}^0_{g,1}$, where
${\mathcal C}_1 = {\mathcal C} \times_{{\mathcal M}^0_g} {\mathcal C}$. The family $p_1$ has a natural regular
global section $\delta$ whose image is the diagonal. By means of $\delta$, for any integer $n$, we have
the universal family of Picard varieties of order $n$,
i.e.\ $$p_1^{(n)} : {\mathcal Pic}^{(n)} \to {\mathcal M}^0_{g,1}$$
(cf.\ \cite[\S\,2]{Ciro}). For any closed point
$[(C,x)] \in {\mathcal M}^0_{g,1}$, its fibre via $p_1^{(n)}$ is isomorphic to ${\rm Pic}^{(n)}(C)$.
As in \cite[Theorem 5.4]{CCFMnonsp}, let $q: {\mathcal U}_{d} \to {\mathcal M}^0_{g,1}$
be the relative moduli stack of degree $d$, rank-two semistable vector bundles; namely,
the fibre of $q$ over $[(C,x)]$ is $U_C(d)$. Thus, we have the following natural surjective map
over ${\mathcal M}^0_{g,1}$
\begin{equation}\label{eq:reldet}
{\mathcal U}_{d} \stackrel{rd}{\to}  {\mathcal Pic}^{(d)},
\end{equation}which is given by the {\em relative determinant}; namely
$$rd((C, x, \Ff)) = (C, x, \det(\Ff)),$$for any $[(C,x)] \in {\mathcal M}^0_{g,1}$ and any
$[\Ff] \in U_C(d)$.
Observe that the fibre of $rd$ over any $[(C,x, H)] \in {\mathcal M}^0_{g,1}$ is
$SU_C(H)$, i.e.\ the moduli stack of semistable, rank-two vector
bundles on $C$ with fixed determinant $H \in {\rm Pic}^d(C)$. From \cite{Ball}, we know that
any $SU_C(H)$ is {\em stably rational}, i.e.\ $SU_C(H) \times \Pp^k$ is rational
for some $k \geq 0$. In particular, it is unirational.

Set $a := \deg (\nu) = 2^{g-1} (d-g+1)$. One has an obvious morphism
\begin{equation}\label{eq:varphi}
\varphi:  {\mathcal U}_{d}  \to {\mathcal Pic}^{(a)}
\end{equation}which maps $(C,x, \Ff)$ to the class of $\nu$. By the unirationality of the
fibres of $rd$, we have a morphism $\phi$ which makes the following diagram
commutative
\begin{equation}\label{eq:diag}
\xymatrix{ {\mathcal U}_{d} \ar[r]^{rd} \ar[dr]!UL_{\varphi}
  &  {\mathcal Pic}^{(d)}  \ar[d]^{\phi} \\
  &  {\mathcal Pic}^{(a)}.
}
\end{equation}
At this point, one concludes by imitating the proof of \cite[Proposition (5.1)]{Ciro},
which can be repeated almost verbatim.
\end{proof}

% ----------------------------------------------------------------

% ----------------------------------------------------------------
\end{document}